\DeclareMathOperator{\Ker}{Ker}
\DeclareMathOperator{\Hom}{Hom} \DeclareMathOperator{\rk}{rk}
\newcommand{\Z}{\mathbb{Z}}
\newcommand{\R}{\mathbb{R}}
\newcommand{\RP}{\mathbb{R}P}
\newcommand{\dt}{\mathbb{Z}_2}
\newcounter{stmcounter}[section]
\newcounter{problcounter}
\numberwithin{equation}{section}
\theoremstyle{plain}
\newtheorem{cor}[stmcounter]{Corollary}
\newtheorem{thm}[stmcounter]{Theorem}
\newtheorem{thmNo}{Theorem}
\newtheorem{prop}[stmcounter]{Proposition}
\newtheorem{lem}[stmcounter]{Lemma}
\newtheorem{probl}[problcounter]{Problem}
\theoremstyle{definition}
\newtheorem{defin}[stmcounter]{Definition}
\theoremstyle{remark}
\newtheorem{ex}[stmcounter]{Example}
\newtheorem{rem}[stmcounter]{Remark}
\newtheorem{nonex}[stmcounter]{Non-Example}
\begin{document}
\title{Equivariantly formal $2$-torus actions of complexity one}

\author{Vladimir Gorchakov}
\address{Faculty of computer science, Higher School of Economics}
\email{vyugorchakov@gmail.com}

\date{\today}
\thanks{The article was prepared within the framework of the HSE University Basic Research Program}

\subjclass[2020]{Primary 57S12, 57S17, 57S25,  57N65, 57R91; Secondary 57R18, 55M35, 55R20}

\begin{abstract}
In this paper we study a specific class of actions of a $2$-torus $\Z_2^k$ on manifolds, namely, the actions of complexity one in general position. We describe the orbit space of equivariantly formal $2$-torus actions of complexity one in general position and restricted complexity one actions in the case of small covers. It is observed that the orbit spaces of such actions are topological manifolds. If the action is equivariantly formal, we prove that the orbit space is a $\Z_2$-homology sphere.  We study a particular subclass of these $2$-torus actions: restrictions of small covers to a subgroup of index 2 in general position. The subgroup of this form exists if and only if the small cover is orientable, and in this case we prove that the orbit space of a restricted 2-torus action is homeomorphic to a sphere. 
\end{abstract}

\maketitle

\section{Introduction}
Let a compact torus $T^k = (S^1)^k$ effectively act on a closed manifold $X^{2n}$ with nonempty finite fixed points set.  The number $n-k$ is called \textit{the complexity of the action}. The study of orbit spaces of complexity zero is a well-know subject of toric topology (See \cite{BP}). In  \cite{AyzCompl}, A.\,Ayzenberg showed that the orbit space of a complexity one action in general position is a topological manifold. In \cite{AyzMasEquiv}, A.\,Ayzenberg and M.\,Masuda described the orbit space of equivariantly formal torus actions of complexity one in general position. In \cite{AyzCher}, A.\,Ayzenberg and V.\,Cherepanov described torus actions of complexity one in non-general position. 
 
Similarly, we can study the orbit space of $\Z_2^k$ action on a manifold $X^n$. The group $\dt^k$ is a real analog of $T^k$, it is called a \textit{$2$-torus}. Similarly to the torus case the number $n-k$ is called \textit{the complexity of the action}. In \cite{YuEqForm}, L.\,Yu studied the orbit space of an equivariantly formal $2$-torus  action of complexity zero. In \cite{BT}, V.\,Buchstaber and S.\,Terzic proved that $Gr_{4,2}(\R)/\dt^3 \cong S^4$ and  $Fl_3(\R)/\dt^2 \cong S^3$ for real Grassmann manifold $G_{4,2}(\R)$ of $2$-planes in $\R^4$ and  the real manifold  $Fl_3(\R)$ of full flags in $R^3$. In \cite{GD}, D.\,Gugnin proved that $T^n/\dt^{n-1} \cong S^n$ for a certain action with isolated fixed points. These are examples of complexity one actions. 

The aim of this paper is to describe the orbit space of equivariantly formal $2$-torus  action of complexity one in general position. 
We also describe  restricted complexity one actions in the case of small covers. 

Let us give preliminary definitions and formulate the main results.

Let a  $2$-torus $\dt^{n-1}$ act effectively on a connected closed smooth manifold $X=X^n$ with nonempty set of fixed points. For a fixed point $x\in X^{\dt^{n-1}}$ of the action we have the tangent representation of $\dt^{n-1}$ at $x$.  Consider $\alpha_{x,1},\ldots,\alpha_{x,n}\in \Hom(\dt^{n-1},\dt)\cong \Z_2^{n-1}$, the weights of the tangent representation at $x$.
The action is said to be in \emph{general position} if  for any fixed point $x$, any $n-1$ of the weights $\alpha_{x,1},\dotsc,\alpha_{x,n}$ are linearly independent over $\dt$. Now we provide a coordinate description of an action in general position in the case of $\R^n$.

Let $G$ be a subgroup of $\mathbb{Z}^{n}_2$ consisting of elements of the following form: 
\begin{equation} 
   G = \{ (g_1, \dots, g_n) \in \dt^n : \Pi_{i=1}^n g_i = 1 \},
\end{equation} where $g_i \in\{-1,1\}$. 
Since $\dt^n$ acts coordinate-wise on $\R^n$, we have an induced action of $G$ on $\R^n$, which we call \textit{the standard complexity one action}.
\begin{rem}
    If an action of $\dt^{n-1}$ on $\R^n$ is in general position, then it is weakly equivalent to the standard complexity one action, see Proposition  \ref{StandardGenPos}.
\end{rem}

C.\,Lange and M.\,Mikhailova in \cite{Mikh}, \cite{La19} studied orbit spaces of general representations of finite groups. It follows from their results that $\R^n/G \cong \R^n$ for complexity one representation in general position. Using this result and some additional condition, which holds for equivariantly formal actions, we show that for a $\dt^{n-1}$-action on $X$ in general position the orbit space $Q = X/\dt^{n-1}$  is a topological manifold. If a $\dt^{n-1}$-action is in non-general position, then $Q$ is a topological manifold with boundary similar to the torus case studied in \cite{Cher} by V.\,Cherepanov.

The action of $\dt^{n-1}$ on $X$ is called \textit{equivariantly formal} if
\begin{equation}
    \dim_{\dt} H^*(X^{\dt^{n-1}};\dt) = \dim_{\dt} H^*(X;\dt),
\end{equation} 
    where $X^{\dt^{n-1}}$ is the fixed point set of $\dt^{n-1}$-action.
\begin{rem}
    If an $\dt^{n-1}$-action is in general position, then the fixed point set is finite and the action is equivariantly formal if and only if $|X^{\dt^{n-1}}| = \dim_{\dt} H^*(X;\dt)$.
\end{rem}

\begin{thmNo}\label{Th1}
Let $\dt^{n-1}$ act on $X = X^n$ equivariantly formal and in general position. Then the orbit space $Q = X/\dt^{n-1}$ is a topological manifold and $H^*(Q; \dt) \cong H^*(S^n; \dt)$.
\end{thmNo}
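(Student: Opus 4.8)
The plan is to handle the manifold assertion with the results already recalled, to use Poincar\'e duality to turn the cohomology statement into a vanishing statement, and then to extract that vanishing from the orbit‑type stratification of $Q$ together with the equivariant‑formality hypothesis.

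Since $X$ is compact and connected, so is $Q$. Near any orbit the action is modelled, via the slice theorem, on $\R^{j}\times W$, where $j$ is the dimension of the stratum containing the orbit and $W$ is the slice representation of the (finite) stabilizer; general position makes $W$ a complexity‑$\le1$ representation in general position, so by Proposition~\ref{StandardGenPos} it is weakly equivalent to a standard one, and hence its quotient is $\cong\R^{\,n-j}$ by the Lange--Mikhailova result $\R^{n}/G\cong\R^{n}$ for complexity‑$\le1$ representations in general position (\cite{Mikh,La19}). Thus $Q$ is locally Euclidean, i.e.\ a closed connected topological $n$‑manifold, and in particular it satisfies $\dt$‑Poincar\'e duality. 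Therefore $H^{0}(Q;\dt)=H^{n}(Q;\dt)=\dt$ and $H^{i}(Q;\dt)\cong H^{n-i}(Q;\dt)$, so the claim reduces to showing $H^{i}(Q;\dt)=0$ for $0<i<n$. I also record, from general position plus equivariant formality (the Remark preceding the theorem), that $\dim_{\dt}H^{*}(X;\dt)=m:=|X^{\dt^{n-1}}|$, equivalently that $H^{*}_{\dt^{n-1}}(X;\dt)$ is a free module of rank $m$ over $R:=H^{*}(B\dt^{n-1};\dt)=\dt[t_{1},\dots,t_{n-1}]$.

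For the vanishing I would stratify $Q=Q_{n}\supseteq Q_{n-1}\supseteq\cdots\supseteq Q_{0}$ by orbit type, with $Q_{j}$ the union of strata of dimension $\le j$ and $Q_{0}=X^{\dt^{n-1}}/\dt^{n-1}$ the $m$ points coming from the fixed points. In the standard complexity‑one model the non‑free locus is $\bigcup_{i<j}\{x_{i}=x_{j}=0\}$, of codimension $\ge2$; hence the non‑free part of $Q$ is $Q_{n-2}$, of codimension $\ge2$ — this is the reason $Q$ has no boundary. By the local models a neighbourhood of a point of the $j$‑stratum is $\R^{j}\times c(L)$, with $L$ the quotient of the unit sphere of a complexity‑$\le1$ general‑position representation of the stabilizer and $c(L)\cong\R^{\,n-j}$, which determines all the links. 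I would then compute $H^{*}(Q;\dt)$ by the Leray spectral sequence of the Borel projection $p\colon E\dt^{n-1}\times_{\dt^{n-1}}X\to Q$ (equivalently, by the skeletal Mayer--Vietoris spectral sequence of the filtration): its bottom row $E_{2}^{\ast,0}$ is $H^{*}(Q;\dt)$, its higher rows are supported on $Q_{n-2}$, and it converges to the free rank‑$m$ $R$‑module $H^{*}_{\dt^{n-1}}(X;\dt)$. Comparing graded dimensions forces the positive‑codimension strata to absorb exactly the gap between $H^{*}(Q;\dt)\otimes R$ and a free rank‑$m$ module, and a recursion in $n$ (the links are again complexity‑$\le1$ quotients) should then yield $H^{*}(Q;\dt)\cong H^{*}(S^{n};\dt)$. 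A more geometric shadow of the same computation: the local map $\R^{n}/G\to\R^{n}/\dt^{n}=\R^{n}_{\geqslant 0}$ presents $Q$ locally as a double cover of an $n$‑manifold‑with‑corners branched over its boundary; assembled globally, equivariant formality should say the ``base'' is $\dt$‑acyclic with boundary a $\dt$‑homology $(n-1)$‑sphere, whence a Mayer--Vietoris argument for the branched double cover (which unfolds to an honest doubling along the corner strata) gives the conclusion.

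The hard part will be precisely this passage from $X$ to $Q$. Equivariant formality is a statement about $H^{*}_{\dt^{n-1}}(X;\dt)$, but the orbit map is a $2^{n-1}$‑fold \emph{branched} cover and there is no transfer mod $2$; indeed, mod $2$ the branch locus genuinely collapses cohomology (already $T^{2}/\dt\cong S^{2}$ while $\dim_{\dt}H^{*}(T^{2};\dt)=4$), so $H^{*}(Q;\dt)$ cannot be identified with an invariant subspace of $H^{*}(X;\dt)$. One must therefore control how the codimension‑$\ge2$ non‑free locus contributes to the spectral sequence (or to the Mayer--Vietoris of the branched cover) and show that equivariant formality is exactly the hypothesis that forces all classes outside degrees $0$ and $n$ to cancel. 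Organising this recursion over the orbit‑type strata — each link being again a complexity‑$\le1$ quotient — is the technical heart of the proof.
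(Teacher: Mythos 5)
Your overall architecture --- reduce to ``$Q$ is a closed topological $n$-manifold'' plus ``$H^i(Q;\dt)=0$ for $0<i<n$'' via Poincar\'e duality, and get the first statement from the slice theorem and the Lange--Mikhailova results --- matches the paper's. But there are two gaps. The first is in the manifold step: you assert that general position makes the slice representation $W$ at an \emph{arbitrary} point a complexity-$\leq 1$ representation in general position. The definition of general position only constrains the tangent weights at the \emph{fixed} points; a priori it says nothing about the normal representation of the stabilizer $H$ along a component $N$ of $X^H$ containing no fixed point, and for a bad $H$ (e.g.\ $H=\{(1,1,1,1),(-1,-1,-1,-1)\}$ on $\R^4$) the quotient is the open cone over $\RP^3$, not $\R^4$. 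The missing ingredient is condition \eqref{eqCond} (every component of $X^H$ contains a global fixed point), which holds here because equivariant formality is inherited by components of $X^H$ (Lemma \ref{Lem:Induced-Formality}); one then transports the identification of $H$ along $N$ to a fixed point, where Corollary \ref{StabOfGenPos} applies. The gap is fillable under your hypotheses, but it must be filled --- equivariant formality is needed already for the manifold claim, not only for the cohomology.

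The second gap is the substantive one: the cohomological half of your argument is a plan, not a proof, and you say so yourself. You correctly diagnose the difficulty (no mod $2$ transfer for the branched orbit map, already visible in $T^2/\dt\cong S^2$), but ``comparing graded dimensions'' in the Leray spectral sequence over $Q$ does not by itself force the vanishing; one needs to know exactly what each stratum contributes. The paper's mechanism is concrete: (i) each face $F$ of rank $i<n-1$ is the orbit space of a face submanifold $X_F$, which is an $i$-dimensional equivariantly formal $2$-torus manifold, so by Yu's Theorem \ref{thm:ComplZero} it is mod $2$ face-acyclic and $H^*(F,F_{-1})\cong H^*(D^i,\partial D^i)$ (Corollary \ref{FaceCohomology}); (ii) exactness of the Atiyah--Bredon--Franz--Puppe sequence \eqref{eqABseqForX} for equivariantly formal actions yields $H^i(Q,Q_{n-2})=0$ for $i<n-1$ (Proposition \ref{RelHom}); (iii) in the spectral sequence of the filtration $\{Q_i\}$ only the $0$-th row and the last column survive on $E_1$, and the $0$-th row is identified, via a relative Vietoris--Begle argument, with the degree-zero part of \eqref{eqABseqForX}, hence is exact. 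None of (i)--(iii) is present in your proposal; in particular you never use equivariant formality beyond the rank count $|X^{\dt^{n-1}}|=\dim_{\dt}H^*(X;\dt)$, whereas the proof needs its two strong consequences (inheritance by face submanifolds and exactness of the ABFP sequence). Without an input of this kind the recursion over links you gesture at will not close.
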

Let $X$ be a small cover and let $G \cong \dt^{n-1}$ be a subgroup of $\dt^n$ of index 2. Then we get the restricted $G$-action of complexity one on $X$. The subgroup $G$ is called a \textit{$2$-subtorus in general position}, if this $G$-action is in general position.
In the case of small covers, we notice that the condition of existence $2$-subtorus in general position is equivalent to the the condition of orientability of a small cover, which was proved by H.\,Nakayama and Y.\,Nishimura in \cite{SmallCovers}. 
\begin{thmNo}\label{Th2}
    Let $X$ be a small cover. There exists a $2$-subtorus in general position if and only if $X$ is orientable. If such $2$-subtorus exist, then it is unique. 
\end{thmNo}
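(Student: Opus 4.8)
The plan is to reduce the general-position condition for the restricted $G$-action to a linear condition on the characteristic function of the small cover, and then to recognize that condition as precisely the criterion for orientability proved by H.\,Nakayama and Y.\,Nishimura in \cite{SmallCovers}.

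Write $X$ as the small cover over a simple polytope $P = P^n$ with characteristic function $\lambda$, so each facet $F$ of $P$ carries a vector $\lambda_F \in \dt^n$. The fixed points of the $\dt^n$-action are the vertices of $P$; at a vertex $v = F_{i_1} \cap \dots \cap F_{i_n}$ the vectors $\lambda_{i_1}, \dots, \lambda_{i_n}$ form a basis of $\dt^n$, and the weights $w_{v,1}, \dots, w_{v,n} \in \Hom(\dt^n, \dt)$ of the tangent $\dt^n$-representation at $v$ are the dual basis, $w_{v,j} = \lambda_{i_j}^{\ast}$. An index-$2$ subgroup $G \leqslant \dt^n$ is the kernel of a unique nonzero $\xi \in \Hom(\dt^n,\dt)$, and the restriction homomorphism $\Hom(\dt^n,\dt) \to \Hom(G,\dt)$ is surjective with kernel exactly $\langle \xi \rangle$; the weights of the tangent $G$-representation at $v$ are the images $\bar w_{v,j}$ of the $w_{v,j}$ under restriction.

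The crux is the lemma: \emph{the restricted $G$-action is in general position at $v$ if and only if $\xi = w_{v,1} + \dots + w_{v,n}$, equivalently $\xi(\lambda_{i_j}) = 1$ for each facet $F_{i_j}$ through $v$.} To see this, write $\xi$ uniquely as $\sum_{i \in S} w_{v,i}$ with $\emptyset \neq S \subseteq \{1,\dots,n\}$. For each $j$ the set $\{\bar w_{v,i} : i \neq j\}$ has $n-1 = \dim_{\dt}\Hom(G,\dt)$ elements, so it fails to be linearly independent exactly when some nonempty subsum $\sum_{i \in T} w_{v,i}$ with $T \subseteq \{1,\dots,n\}\setminus\{j\}$ lies in the kernel $\{0,\xi\}$ of the restriction map; as the $w_{v,i}$ form a basis this occurs if and only if $T = S$, i.e. if and only if $j \notin S$. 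Hence general position at $v$ for every $j$ is equivalent to $S = \{1,\dots,n\}$. Since every facet of $P$ contains a vertex, the restricted $G$-action is in general position (at all of its fixed points) if and only if $\xi(\lambda_F) = 1$ for every facet $F$ of $P$.

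Finally, by the theorem of Nakayama and Nishimura \cite{SmallCovers}, $X$ is orientable if and only if there exists $\xi \in \Hom(\dt^n,\dt)$ with $\xi(\lambda_F) = 1$ for all facets $F$. Any such $\xi$ is nonzero, so it defines an index-$2$ subgroup $G = \ker\xi$, which by the lemma is a $2$-subtorus in general position, and conversely every $2$-subtorus in general position arises this way; this is the asserted equivalence. Uniqueness is then immediate: the vectors $\lambda_F$ span $\dt^n$ (the $n$ of them meeting at any fixed vertex already form a basis), so the system $\xi(\lambda_F) = 1$ has at most one solution, hence there is at most one such $G$. I expect the only genuinely delicate point to be pinning down the local model — that the tangent weights at a fixed point are the dual basis of the characteristic vectors there; granting that dictionary, the rest is elementary linear algebra over $\dt$ together with the cited orientability criterion.
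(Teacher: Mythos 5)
Your proposal is correct and follows essentially the same route as the paper: it establishes that the restricted $G=\Ker(\xi)$-action is in general position precisely when $\xi(\lambda_F)=1$ for all facets $F$ (the paper's Proposition \ref{exsubtorus}, proved by the same dual-basis/weights dictionary and the same linear algebra over $\dt$, phrased in $(\dt^n)^*/\langle\xi\rangle$ rather than via the restriction map to $\Hom(G,\dt)$), then invokes the Nakayama--Nishimura orientability criterion, and derives uniqueness from the fact that the characteristic vectors at a single vertex already form a basis, exactly as in the paper's Remark \ref{GenPosSC}.
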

If the $2$-subtorus in general position exists, then the orbit space is homeomorphic to the sphere. 
\begin{thmNo}\label{Th3}
   Let $X = X^n$ be an orientable small cover and let $G$ be the $2$-subtorus in general position. Then the orbit space $X/G$ is homeomorphic to the $n$-dimensional sphere~$S^n$.
\end{thmNo}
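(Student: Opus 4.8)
The plan is to identify the orbit space $X/G$ explicitly with the \emph{double} of the simple polytope $P = P^n$ over which $X$ is a small cover, and then to use the elementary fact that the double of a simple polytope is a sphere.

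Let $\lambda$ be the characteristic function of $X$, assigning to each facet $F_i$ of $P$ a vector $\lambda(F_i)\in\dt^n$, so that by the standard construction $X = (P\times\dt^n)/{\sim}$, where $(p,g)\sim(p,h)$ iff $g-h$ lies in the subgroup $G_{F(p)}\le\dt^n$ generated by the $\lambda(F_i)$ over all facets $F_i\ni p$. Since $G$ has index $2$ we may write $G = \ker\mu$ for the unique nonzero $\mu\in\Hom(\dt^n,\dt)$. The first step is to show that $\mu(\lambda(F_i)) = 1$ for every facet $F_i$. Fix a vertex $v = F_{i_1}\cap\dots\cap F_{i_n}$ and the fixed point $x$ over it. The tangent weights $\chi_1,\dots,\chi_n\in\Hom(\dt^n,\dt)$ of the $\dt^n$-action at $x$ can be labelled so as to form the basis dual to $\lambda(F_{i_1}),\dots,\lambda(F_{i_n})$, i.e. $\chi_j(\lambda(F_{i_k})) = \delta_{jk}$, and the weights of the restricted $G$-action at $x$ are their images $\bar\chi_j$ under the surjection $\Hom(\dt^n,\dt)\to\Hom(G,\dt)$ with kernel $\{0,\mu\}$. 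Writing $\mu = \sum_{j\in T}\chi_j$, the nonempty subsets $S\subseteq\{1,\dots,n\}$ with $\sum_{j\in S}\bar\chi_j = 0$ are exactly $S = T$, so for each $k$ the $n-1$ weights $\{\bar\chi_j : j\ne k\}$ are linearly independent iff $k\in T$. General position of the $G$-action means this holds for every $k$, forcing $T = \{1,\dots,n\}$, hence $\mu(\lambda(F_{i_j})) = 1$ for all $j$; since every facet of $P$ contains a vertex, $\mu(\lambda(F_i)) = 1$ for all $i$. (Equivalently, $\mu$ is the orientation character of the orientable small cover $X$, cf.\ \cite{SmallCovers}.)

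Next I descend the presentation of $X$ to the orbit space. As $G$ acts only on the second factor and preserves $\sim$, one gets $X/G = (P\times(\dt^n/G))/{\sim'}$; identifying $\dt^n/G\cong\dt$ via $\mu$, this reads $X/G = (P\times\dt)/{\sim'}$ with $(p,a)\sim'(p,b)$ iff $a-b\in\mu(G_{F(p)})$. If $p$ lies in the interior of $P$ then $G_{F(p)} = 0$ and no identification occurs; if $p\in\partial P$ then $p$ lies on some facet $F_i$, and $\mu(\lambda(F_i)) = 1$ gives $\mu(G_{F(p)}) = \dt$, so $(p,0)\sim'(p,1)$. Therefore $X/G$ is precisely two copies of $P$ glued along their common boundary $\partial P$, that is, the double of $P$. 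Finally, a simple polytope $P^n$ is homeomorphic to the disk $D^n$ with $\partial P\cong S^{n-1}$, and the double of $D^n$ along its boundary is $S^n$; hence $X/G\cong S^n$.

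The crux is the first step: translating the general-position hypothesis on the restricted $G$-action into the combinatorial identity $\mu(\lambda(F_i)) = 1$ for every $i$ (equivalently, recognising that $G$ must be the kernel of the orientation character). Once this is in hand, descending the small-cover presentation to $X/G$ and recognising the result as the double of $P$ — and thus as $S^n$ — is essentially formal; the only routine checks are that the gluing relation on $X$ descends to the stated relation on $X/G$, and that "the double of a simple polytope is $S^n$" is used only at the level of underlying topological spaces, where it is elementary.
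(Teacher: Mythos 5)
Your proposal is correct and follows essentially the same route as the paper: both arguments identify $X/G$ with the double of the simple polytope $P$ glued along $\partial P$ (hence with $S^n$), and both hinge on the same key fact that general position of $G=\ker\mu$ forces $\mu(\lambda(F_i))=1$ on every facet, which is the paper's Proposition \ref{exsubtorus} and Corollary \ref{StGenPosSC}. The only cosmetic difference is that you descend the explicit quotient construction $X=(P\times\dt^n)/\!\sim$ directly, whereas the paper analyzes the fibers of $Q\to P$ and invokes a section over the interior of $P$; the conclusion and the essential content are identical.
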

We have the following coordinate description of the  $2$-subtorus in general position.
\begin{rem}
        Let $X = X^n$ be an orientable small cover over a simple polytope $P$, let $p = F_{i_1} \cap \dots \cap F_{i_n}$ be a vertex of $P$ and $\lambda_{i_1}, \dots, \lambda_{i_n}$ be  the corresponding characteristic vectors. Taking the characteristic vectors as standard generators of $\dt^n$, consider the following subgroup:
   $$G = \{ (g_1, \dots, g_n) \in \dt^n : \Pi_{i=1}^n g_i = 1 \}.$$ 
 Then $G$ is the $2$-subtorus in general position.  
\end{rem}
Now we provide some examples of complexity one actions. In all examples below actions are equivariantly formal and in general position.  In examples $1.5$, $1.6$ we get restricted actions on small covers. 
\begin{ex}
Let $\dt$ act on $S^2$ by rotation on angle $180^\circ$ around an axis. Then $S^2/\dt \cong S^2$. 
\end{ex}
\begin{figure}[h]
	\centering
	\scalebox{0.2}{\includegraphics{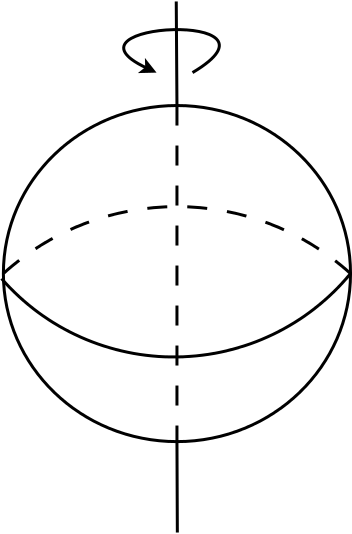}}
	\caption*{The action of $\dt$ on $S^2$}
\end{figure}

\begin{ex}
More generally, let $\dt$ act on a closed orientable surface $M_g$ of genus $g$ by rotation on angle $180^\circ$ around the axis.  Then $M_g/\dt \cong S^2$. If $g=1$, then it is the particular case of the next example. Notice that if $g=0$, then $M_0 = S^2$ is not a small cover. 
\end{ex}

\begin{figure}[h]
\begin{minipage}[h]{0.49\linewidth}
\center{\includegraphics[width=0.8\linewidth]{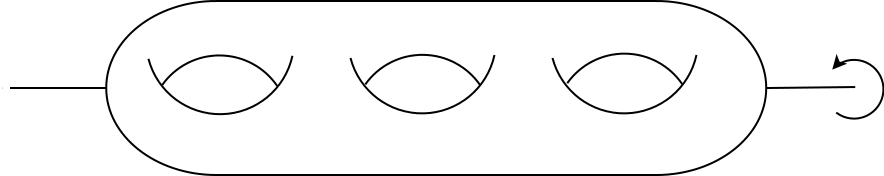}}
\caption*{The action of $\dt$ on $M_3$}
\end{minipage}
\hfill
\begin{minipage}[h]{0.49\linewidth}
\center{\includegraphics[width=0.55\linewidth]{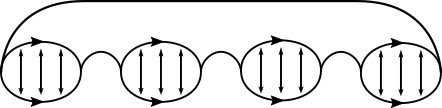} }
\caption*{The orbit space of the $\dt$-action on $M_3$}
\end{minipage}
\end{figure}

\begin{ex}
Consider a $\dt$-action on $S^1$ by the map $(x,y) \to (x,-y)$. Taking the n-fold product, we have a $\dt^n$-action on the $T^n$.  Let $$G = \{ (g_1, \dots, g_n) \in \dt^n : \Pi_{i=1}^n g_i = 1 \}$$
be the index 2 subgroup of orientation-preserving elements. In \cite{GD} it was proved that  $T^n/G \cong S^n$.
\end{ex}

Two examples below are not small covers. 

\begin{ex}
    Let $\dt^4$ act on $\R^4$ by the standard action. From this we get the effective action of $\dt^3$ on the real Grassmann manifold $G_{4,2}(\R)$ of $2$-planes in $\R^4$. In \cite{BT} it was proved that $G_{4,2}(\R)/\dt^3 \cong S^4$.
\end{ex}
\begin{ex}
    Let $\dt^3$ act on $\R^3$ by the standard action. From this we get the effective action of $\dt^2$ on the real full flag manifold $Fl_3(\R) $. In \cite{BT} it was proved that $Fl_3(\R)/\dt^2 \cong S^3$.
\end{ex}
Now we provide a possible connection of Theorem $3$ with the theory of \textit{n-valued groups}.  See \cite{ngroups} for the definition of $n$-valued group and other details. The following construction can be used to produce $n$-valued groups.

Let $G$ be a group, let $A$ be a finite group with $|A| = n$ and $\phi: A \to Aut(G)$ be homomorphism to the group of automorphisms of $G$. Then  we have an $n$-valued group structure on the orbit space $X = G/\phi(A)$, see \cite[Thm. 1]{ngroups}.

In example $1.6$ we get a $2^{n-1}$-valued topological group structure on the $n$-sphere $S^n$ for $n \geq 2$. The following problem was posed by V.\,M.\,Buchstaber.

\begin{probl}
Can Theorem $3$ be applied to other small covers to provide new examples of $2^{n-1}$-valued group structure on $S^n$?
\end{probl}
For this we need a small cover $X$ with the property that it is a group and $\dt^n$ acts by group automorphisms. 

\begin{nonex}
    We have that $\RP^3$ is diffeomorphic to $SO(3, \R)$, hence there is a Lie group structure on a small cover $\RP^3$. However, $\dt^3$ does not act by Lie group automorphisms. Indeed, since all automorphisms of $SO(3,\R)$ are inner, we have that $Aut(SO(3,\R)) \cong SO(3, \R)$. There is no finite subgroup in $SO(3,\R)$  isomorphic to $\dt^3$. Therefore, $\dt^3$ does not act by group automorphisms.  
\end{nonex}

\section{Preliminaries}
In this section we recall some general facts about group actions and $2$-torus actions on manifolds. The main reference about group actions is  \cite{Bredon}. The main reference about equivariantly formal $2$-torus actions is \cite{YuEqForm}.

Let a group $G$ act effectively on closed smooth manifold $X$. In this paper we consider only smooth action. For a point $x \in X$ let $Stab(x)$ denote the stabilizer subgroup of $G$ and $Gx$ the orbit of $x$.  We define the partition by orbit types 
\begin{equation}
    X = \bigsqcup_{H \subset G} X^{(H)}.
\end{equation}
Here $X^{(H)} = \{ x \in X : Stab(x) = H$\}. 

We denote the fixed point of subgroup $H$ by $X^H = \{x \in X: Stab(x) \subset H\}$.

Let $x \in X^{(H)}$ be a point with the  stabilizer subgroup~$H$. We  can define the \textit{tangent representation} of $H$ at $x$: 
$$
H \to GL(T_xX/T_xGx).
$$

Let $V_x$ denote $T_xX/T_xGx$. There is the following theorem about $G$-equivariant tubular neighborhood.
\begin{thm}[The Slice Theorem]
There exist a $G$-equivariant diffeomorphism from the $G \times_{Stab(x)} V_x$ onto a $G$-invariant neighborhood of the orbit $Gx$ in $X$, which send the zero section $G/Stab(x)$ onto the orbit $Gx$.
\end{thm}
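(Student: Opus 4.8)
The plan is to run the classical argument that produces an equivariant tubular neighborhood from an invariant Riemannian metric. Since $G$ is finite (in our applications $G$ is a $2$-torus or a subgroup of one; more generally it suffices that $G$ be a compact Lie group), averaging an arbitrary Riemannian metric over $G$ produces a $G$-invariant metric; fix such a metric $\langle\,\cdot\,,\cdot\,\rangle$. Write $H = Stab(x)$ and let $N \subset T_xX$ be the $\langle\,\cdot\,,\cdot\,\rangle$-orthogonal complement of the tangent space $T_x(Gx)$ of the orbit. The quotient projection $T_xX \to T_xX/T_x(Gx) = V_x$ restricts to an $H$-equivariant linear isomorphism $N \toiso V_x$, and the normal bundle $\nu$ of the (compact) submanifold $Gx \subset X$ is $G$-equivariantly isomorphic to $G \times_H N \cong G \times_H V_x$; this uses $Gx \cong G/H$ together with the fact that the $G$-action on $\nu$ along the orbit is induced by the isotropy representation of $H$ on the normal fibre at $x$.

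Next I would bring in the exponential map $\exp$ of the invariant metric. Invariance forces $\exp$ to be $G$-equivariant on its domain, $g\cdot \exp_p(v) = \exp_{gp}(dg_p\, v)$, because each $g \in G$ acts as an isometry and hence carries geodesics to geodesics. Restricting $\exp$ to the normal bundle gives a $G$-equivariant smooth map $\nu \to X$ whose differential along the zero section is an isomorphism, so it is a diffeomorphism from some neighborhood of the zero section onto a neighborhood of $Gx$. Since $Gx$ is compact and the whole construction is $G$-invariant, one can choose $\varepsilon > 0$ so that $\exp$ is a diffeomorphism from the open disk subbundle $\nu_{<\varepsilon} = \{v \in \nu : |v| < \varepsilon\}$ onto an open neighborhood $U$ of $Gx$; here $|v|$ is the norm coming from the invariant metric, which is a $G$-invariant function on $\nu$, so both $\nu_{<\varepsilon}$ and $U$ are $G$-invariant.

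Finally, to upgrade ``neighborhood of the zero section'' to ``all of $G \times_H V_x$'', note that the fibrewise radial map $v \mapsto \varepsilon^2 v/(\varepsilon^2 - |v|^2)$ is a diffeomorphism of $\nu_{<\varepsilon}$ onto $\nu$ that fixes the zero section and is assembled only from the $G$-invariant norm and scalar multiplication in the fibres, hence is $G$-equivariant. Composing its inverse with $\exp$ and with the isomorphism $G \times_H V_x \cong \nu$ from the first step yields a $G$-equivariant diffeomorphism $G \times_H V_x \to U$ onto the $G$-invariant neighborhood $U$ of $Gx$, and since $\exp_p(0) = p$ it carries the zero section $G/H$ onto $Gx$, as required.

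I expect the only delicate points to be the equivariance bookkeeping: that a $G$-invariant metric exists (this is exactly where finiteness, or compactness, of $G$ is used, via averaging), that its exponential map is genuinely $G$-equivariant, and that compactness of the orbit allows a single radius $\varepsilon$ to be chosen so that the resulting tubular neighborhood is $G$-invariant. Granting these points, the rest is the standard tubular neighborhood theorem for the submanifold $Gx \subset X$; alternatively, one may simply refer to \cite{Bredon}.
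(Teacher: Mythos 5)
Your argument is correct: it is the classical proof of the equivariant tubular neighborhood theorem (average a metric over the compact group, identify the normal bundle of the orbit with $G\times_H V_x$ via the isotropy representation at $x$, exponentiate equivariantly, and rescale the $\varepsilon$-disk bundle radially onto the whole bundle). The paper itself offers no proof of this statement --- it is quoted as a standard result with \cite{Bredon} as the reference --- so there is nothing to contrast with; I will only note that in the setting of this paper $G=\dt^{n-1}$ is finite, so the orbit $Gx$ is discrete, $T_x(Gx)=0$ and $V_x=T_xX$, and your argument degenerates to the even simpler statement that $\exp_x$ gives an $H$-equivariant chart around each point of the orbit, which is exactly how the theorem is used later (e.g.\ in the proof that $Q$ is a topological manifold).
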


We now recall the notation of an equivariantly formal action of 2-torus $\dt^k$, see \cite{YuEqForm} for the details. 

There is a classical result of E.\,Floyd. 
\begin{thm}[\cite{Floyd}] \label{thm:Hsiang}  
 For any paracompact $\dt^{k}$-space $X$ with finite cohomology dimension, the fixed point set
$X^{\dt^{k}}$ always satisfies 
\begin{equation} \label{eqSmithMain}
    \dim_{\Z_2} H^*(X^{\dt^{k}};\Z_2) \leq \dim_{\Z_2} H^*(X;\dt).
\end{equation}
 \end{thm}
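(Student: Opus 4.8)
This is the classical Smith--Floyd inequality. The plan is to reduce it to the cyclic case $k=1$ and then to run Smith theory; the details may be found in \cite{Floyd} and \cite{Bredon}. For the reduction I would induct on $k$, the case $k=0$ being vacuous. Given $k\ge 1$, choose a nonidentity element $g\in\dt^{k}$ (so $g$ has order two), set $G_{1}=\langle g\rangle\cong\dt$, and split $\dt^{k}=G_{1}\times H$ with $H\cong\dt^{k-1}$. The fixed subspace $X^{G_{1}}$ is closed in $X$, hence again paracompact of finite cohomology dimension, and it is $\dt^{k}$-invariant because $\dt^{k}$ is abelian; the induced action of $\dt^{k}/G_{1}\cong\dt^{k-1}$ on $X^{G_{1}}$ is the restriction of the $H$-action, and its fixed-point set is the common fixed-point set of $G_{1}$ and $H$, namely $X^{\dt^{k}}$. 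Applying the inductive hypothesis to the $\dt^{k-1}$-space $X^{G_{1}}$ and then the base case $k=1$ to $X$ itself yields
\[
\dim_{\dt}H^{*}(X^{\dt^{k}};\dt)\ \le\ \dim_{\dt}H^{*}(X^{G_{1}};\dt)\ \le\ \dim_{\dt}H^{*}(X;\dt).
\]

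For the base case $k=1$ I would pass to the Borel construction. We may assume $\dim_{\dt}H^{*}(X;\dt)<\infty$, since otherwise there is nothing to prove. Write $G=\dt$, $F=X^{G}$, and form $X_{hG}=EG\times_{G}X$, so that $H^{*}_{G}(X):=H^{*}(X_{hG};\dt)$ is a module over $H^{*}(BG;\dt)=\dt[u]$ with $\deg u=1$. First, the Serre spectral sequence of the Borel fibration $X\to X_{hG}\to BG$ has $E_{2}^{p,q}=H^{p}(G;H^{q}(X;\dt))$, a finitely generated $\dt[u]$-module of $\dt[u]$-rank at most $\dim_{\dt}H^{*}(X;\dt)$: a trivial $G$-summand of $H^{*}(X;\dt)$ contributes a free $\dt[u]$-summand to $E_{2}$, while a free $\dt[G]$-summand contributes only $\dt[u]$-torsion. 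Since every differential is $\dt[u]$-linear and $\dt[u]$-rank is additive along the spectral sequence filtration of $H^{*}_{G}(X)$, which is finite in each total degree, one gets $\rk_{\dt[u]}H^{*}_{G}(X)\le\dim_{\dt}H^{*}(X;\dt)$. Second, $G$ acts trivially on $F$, so $F_{hG}=BG\times F$ and $H^{*}_{G}(F)=\dt[u]\otimes H^{*}(F;\dt)$ is a free $\dt[u]$-module of rank $\dim_{\dt}H^{*}(F;\dt)$. The localization theorem of Smith theory, which holds for finitistic $G$-spaces such as $X$ (see \cite{Bredon}, \cite{Floyd}), states that the restriction map $H^{*}_{G}(X)[u^{-1}]\to H^{*}_{G}(F)[u^{-1}]$ is an isomorphism; comparing dimensions over the fraction field $\dt(u)$ then gives
\[
\dim_{\dt}H^{*}(F;\dt)=\rk_{\dt[u]}H^{*}_{G}(F)=\rk_{\dt[u]}H^{*}_{G}(X)\le\dim_{\dt}H^{*}(X;\dt).
\]

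The step I expect to be the real obstacle is the localization theorem in the stated generality. For a finite $G$-CW complex it is elementary, but for an arbitrary paracompact space of finite cohomology dimension one must phrase everything in \v{C}ech cohomology and use the sheaf-theoretic form of Smith theory. An equivalent route, which is in fact Floyd's original one and bypasses the Borel construction, works on the \v{C}ech cochain level with the operator $\rho=1+g\in\dt[G]$, noting that $\rho^{2}=0$ over $\dt$, and combines the two short exact sequences
\[
0\to\ker\rho\to C^{*}(X;\dt)\xrightarrow{\ \rho\ }\rho\,C^{*}(X;\dt)\to 0,\qquad 0\to\rho\,C^{*}(X;\dt)\to\ker\rho\to\ker\rho/\rho\,C^{*}(X;\dt)\to 0
\]
with the identification of $\ker\rho/\rho\,C^{*}(X;\dt)$ with $C^{*}(F;\dt)$. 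Establishing that last identification — that all local contributions away from the fixed set cancel — is the delicate point; granted it, the inequality follows from a dimension count over $\dt$.
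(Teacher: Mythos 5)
The paper offers no proof of this statement: it is quoted verbatim as a classical theorem of Floyd with a citation to \cite{Floyd}, so there is nothing in the text to compare your argument against. Your sketch is the standard Smith--Floyd argument and is correct in outline. The reduction to $k=1$ by induction on a splitting $\dt^{k}=G_{1}\times H$ is exactly right (one only needs to note, as you do, that $X^{G_{1}}$ is again paracompact of finite cohomology dimension and that $(X^{G_{1}})^{H}=X^{\dt^{k}}$), and both routes you give for the base case --- localization for the Borel construction, or Floyd's original special (Smith) cohomology groups built from $\rho=1+g$ --- are the two standard ones. You have also correctly isolated where the real work lies: for a general paracompact finitistic space the localization theorem, equivalently the splitting $\ker\rho=\rho\,C^{*}(X)\oplus C^{*}(F)$, must be set up sheaf-theoretically in \v{C}ech/Alexander--Spanier cohomology rather than cellularly. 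One further small point deserves care in the Borel route: $H^{*}_{G}(X)$ is not finitely generated over $\dt[u]$, so the rank count is best run degree by degree, using that $H^{n}_{G}(X)\to H^{n}_{G}(F)$ is an isomorphism for $n$ exceeding the cohomology dimension of $X$ and that $\dim_{\dt}H^{n}_{G}(X)\le\sum_{p+q=n}\dim_{\dt}E_{\infty}^{p,q}\le\dim_{\dt}H^{*}(X;\dt)$ there; this is exactly how the argument is closed in \cite{Bredon} and \cite{Hsiang75}.
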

 The next theorem says, when the equality in \eqref{eqSmithMain} holds.
 \begin{thm}[\cite{Hsiang75}]\label{EqFormalityTh}
     The equality in \eqref{eqSmithMain} holds if and only if the $E_2=E_{\infty}$ for the Serre spectral sequence of the fibration $X\rightarrow E \dt^{k}\times_{\dt^{k}} X \rightarrow B\dt^{k}$. 
 \end{thm}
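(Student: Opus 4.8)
The plan is to derive the equivalence from two independent computations of the rank, over $R:=H^*(B\dt^{k};\Z_2)\cong\Z_2[t_1,\dots,t_k]$, of the graded $R$-module $M:=H^*_{\dt^{k}}(X;\Z_2)=H^*(E\dt^{k}\times_{\dt^{k}}X;\Z_2)$, where $\rk_R(\cdot)$ denotes the dimension after localizing at the generic point of $\operatorname{Spec}R$ (equivalently, after inverting all nonzero homogeneous elements). First I would invoke the Borel localization theorem for $2$-torus actions on paracompact spaces of finite cohomological dimension: the restriction $M\to H^*_{\dt^{k}}(X^{\dt^{k}};\Z_2)$ becomes an isomorphism after this localization, and since $\dt^{k}$ acts trivially on $X^{\dt^{k}}$ the target is $H^*(X^{\dt^{k}};\Z_2)\otimes_{\Z_2}R$, which is free of rank $\dim_{\Z_2}H^*(X^{\dt^{k}};\Z_2)$. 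Hence $\rk_R M=\dim_{\Z_2}H^*(X^{\dt^{k}};\Z_2)$.

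Next I would read off $\rk_R M$ from the Serre spectral sequence of $X\to E\dt^{k}\times_{\dt^{k}}X\to B\dt^{k}$. Each page $E_r$ is a graded $R$-module (via $R=E_2^{*,0}$), the differentials $d_r$ are $R$-linear, and $E_\infty$ is the associated graded of $M$ for the Serre filtration (which is degreewise finite, and in fact $E_{N+2}=E_\infty$ once $H^q(X;\Z_2)=0$ for $q>N$), so $\rk_R M=\rk_R E_\infty$. Passing from $E_r$ to its homology can only decrease the rank, $\rk_R E_{r+1}=\rk_R E_r-2\rk_R(\im d_r)\le\rk_R E_r$, and the $E_2$-page satisfies $\rk_R E_2\le\dim_{\Z_2}H^*(X;\Z_2)$, with equality exactly when $\dt^{k}$ acts trivially on $H^*(X;\Z_2)$, in which case $E_2\cong R\otimes_{\Z_2}H^*(X;\Z_2)$ is free over $R$. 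Concatenating, $\dim_{\Z_2}H^*(X^{\dt^{k}};\Z_2)=\rk_R E_\infty\le\rk_R E_2\le\dim_{\Z_2}H^*(X;\Z_2)$, which already reproves Theorem~\ref{thm:Hsiang}. If the equality in \eqref{eqSmithMain} holds, both intermediate inequalities must be equalities: from $\rk_R E_2=\dim_{\Z_2}H^*(X;\Z_2)$ we get that $E_2$ is the free module $R\otimes_{\Z_2}H^*(X;\Z_2)$, and from $\rk_R E_\infty=\rk_R E_2$ we get $\rk_R(\im d_r)=0$ for all $r\ge2$; since a torsion submodule of a free $R$-module is zero, an easy induction shows all differentials vanish (if $E_r$ is free then $\im d_r$ is torsion, hence zero, so $d_r=0$ and $E_{r+1}=E_r$ is again free), i.e.\ $E_2=E_\infty$. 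Conversely, if $E_2=E_\infty$ then $\operatorname{gr}M\cong E_2$, and — reading "$E_2=E_\infty$" in the strong sense that $E_\infty\cong H^*(B\dt^{k};\Z_2)\otimes H^*(X;\Z_2)$, so that the Serre filtration splits into free $R$-summands — $M$ is a free $R$-module of rank $\dim_{\Z_2}H^*(X;\Z_2)$; comparing with the first computation yields the equality in \eqref{eqSmithMain}.

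The one genuinely external ingredient is the Borel localization theorem, which I would cite (from Hsiang's book, or from Allday--Puppe) rather than reprove, together with the standard short module-theoretic fact that $\rk_R H^*(\dt^{k};V)=\dim_{\Z_2}V$ iff $V$ is a trivial $\Z_2[\dt^{k}]$-module (one filters $V$ by trivial modules, since $\Z_2$ is the only simple module over the local ring $\Z_2[\dt^{k}]$, and uses the resulting long exact sequences). The point that needs care is the local coefficient system $\mathcal H^q(X;\Z_2)$ in the Serre spectral sequence: a priori $\dt^{k}$ may act nontrivially on $H^*(X;\Z_2)$, and then the Leray--Serre $E_2$ is strictly smaller than $H^*(B\dt^{k};\Z_2)\otimes H^*(X;\Z_2)$, so one must observe that equivariant formality forces this action to be trivial. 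Accordingly "$E_2=E_\infty$" is to be understood as "$E_\infty\cong H^*(B\dt^{k};\Z_2)\otimes H^*(X;\Z_2)$", i.e.\ the Borel fibration is totally non-homologous to zero; with this reading the statement is precise and the argument above closes.
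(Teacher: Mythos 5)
The paper does not actually prove this statement: it is imported verbatim as a known theorem from Hsiang's book, so there is no internal argument to compare yours against. What you give is the standard localization-theoretic proof of precisely this result (as in Hsiang and Allday--Puppe): compute the rank of $H^*_{\dt^k}(X;\Z_2)$ over $R=H^*(B\dt^k;\Z_2)$ once via the Borel localization theorem, which yields $\dim_{\Z_2}H^*(X^{\dt^k};\Z_2)$, and once through the Serre spectral sequence, which yields at most $\dim_{\Z_2}H^*(X;\Z_2)$ with equality exactly when the local system is trivial and every differential vanishes. Your chain of inequalities is set up correctly, the additivity of rank over the filtration makes the converse direction immediate (you do not even need freeness of $M$, only $\rk_R M=\rk_R E_\infty$), and the induction showing that a free page forces the next differential's image to be a torsion submodule of a free module, hence zero, is right. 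You also correctly isolate the one point where the literal statement is ambiguous: since $\pi_1(B\dt^k)=\dt^k$ may act nontrivially on $H^*(X;\Z_2)$, degeneration of the twisted spectral sequence at $E_2$ does not by itself give the equality in \eqref{eqSmithMain}, so the theorem must be read as ``the Borel fibration is totally non-homologous to zero,'' which is how the cited source states it; your chain of inequalities shows that equality in \eqref{eqSmithMain} forces the coefficient action to be trivial, so the two readings coincide in the relevant direction. The two external ingredients you invoke without proof --- the localization theorem for $2$-tori on paracompact spaces of finite cohomological dimension, and the fact that $\rk_R H^*(\dt^k;V)=\dim_{\Z_2}V$ iff $V$ is a trivial $\Z_2[\dt^k]$-module --- are standard and properly attributed. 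I see no gap.
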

\begin{defin}\label{EqFormality}
Let a $2$-torus $\dt^k$ act on a closed smooth manifold X. The action is called \emph{equivariantly formal} over $\dt$ if there is equality in \eqref{eqSmithMain}.
\end{defin}
Sometimes the degeneration of the Serre spectral sequence at $E_2$ is taken as the definition of equivariant formality. This definition is equivalent to Definition \ref{EqFormality} according to Theorem \ref{EqFormalityTh}. The notation of equivariant formality is similar to the corresponding notion in the theory of torus actions.  
\begin{rem}
If the fixed point set $X^{\dt^k}$ is finite, then $2$-torus action equivariant formal if and only if
\begin{equation}
|X^{\dt^k}| = \dim_{\dt} H^*(X;\dt).
\end{equation}

\end{rem}
This remark is useful to characterize equivariant formality of the particular $2$-torus actions. 

\begin{defin}
    Consider a non-free effective action of $\dt^n$ on a closed connected smooth manifold $X^n$. Manifold with such action is called  a \textit{$2$-torus manifold}.
\end{defin}

 L.\,Yu proved the following criteria of equivariant formality for $2$-torus manifolds in terms of its orbit space.
 \begin{thm}[\cite{YuEqForm}] \label{thm:ComplZero}
    Let $X$ be a $2$-torus manifold with orbit space $Q$.
      \begin{itemize}
      \item[(i)] $X$ is equivariantly formal if and only if $X$ is locally standard and $Q$ is mod $2$ face-acyclic.
      \item[(ii)] $X$ is equivariantly formal 
      and $H^*(X;\Z_2)$
     is generated by its degree-one part if and only if
      $X$ is locally standard and $Q$ is a mod $2$ homology polytope.
      \end{itemize}
   \end{thm}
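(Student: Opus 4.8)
\emph{Strategy.} I would prove each of the two equivalences by establishing both implications, with a single cohomological computation for locally standard $2$-torus manifolds serving as the common engine. Throughout one uses that the fixed set of a $2$-torus manifold is finite, so equivariant formality means precisely $|X^{\dt^n}| = \dim_{\dt} H^*(X;\dt)$. For the direction \emph{locally standard $+$ face-acyclic $\Rightarrow$ formality}: if $X$ is locally standard then, exactly as in the complexity-zero toric picture, the orbit map $\pi\colon X\to Q$ presents $Q$ as a nice manifold with corners whose facets $F_1,\dots,F_m$ carry a characteristic function $\lambda$ (each $F_j$ labelled by the rank-one isotropy group of its generic orbit), and $X\cong(\dt^n\times Q)/\!\sim$. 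Filtering $X$ by orbit dimension — equivalently, filtering $Q$ by its faces — produces a Mayer--Vietoris type spectral sequence whose first page is assembled from the groups $H^*(F;\dt)$, one summand per face $F$ of $Q$, with shifts recording $\codim F$. When $Q$ is mod $2$ face-acyclic only the top-degree contribution of each face survives; after a diagonal collapse one reads off that $\dim_{\dt} H^*(X;\dt)$ equals the number of vertices of $Q$, which is $|X^{\dt^n}|$, so $X$ is equivariantly formal. If moreover $Q$ is a mod $2$ homology polytope, its face poset is dual to a simplicial complex and the collapsed spectral sequence identifies $H^*(X;\dt)$ with the associated face ring modulo the linear system cut out by $\lambda$, a ring generated by the degree-one classes Poincar\'e dual to the characteristic submanifolds $\pi^{-1}(F_j)$.

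\emph{From formality to locally standard.} First, effectiveness alone pins down the local model at a fixed point $x$: the kernel of the tangent representation $\dt^n\to GL(T_xX)$ fixes a slice neighbourhood, hence acts trivially on the connected $X$, hence is trivial, so $T_xX$ splits into $n$ nontrivial characters which must form a basis of $\Hom(\dt^n,\dt)$; thus the tangent representation is the standard one and $X^{\dt^n}$ is finite. Next, let $M_1,\dots,M_r$ be the connected components of the fixed sets of those rank-one subtori that fix a codimension-one stratum (the characteristic submanifolds). Each $M_i^{\,n-1}$ carries an effective action of a quotient $2$-torus $\cong\dt^{n-1}$, making it a lower-dimensional $2$-torus manifold, and a Smith-theoretic comparison of total $\dt$-Betti numbers through \eqref{eqSmithMain}, together with the collapse of the Serre spectral sequence of Theorem~\ref{EqFormalityTh} along the inclusion, shows that this restricted action is again equivariantly formal; by induction on $n$ each $M_i$ is locally standard. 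The remaining, and principal, point is to show that no other isotropy occurs: every orbit avoiding $\bigcup_i M_i$ is free, and the charts furnished by the Slice Theorem around an arbitrary orbit assemble into the standard model — in particular no sub-$2$-torus fixes a positive-codimensional stratum with a non-standard (e.g.\ repeated-weight) isotropy representation. Such a bad orbit would, via the same Smith--Floyd bookkeeping, push $\dim_{\dt} H^*(X;\dt)$ strictly above $|X^{\dt^n}|$, contradicting equivariant formality; carrying this out uniformly is the heart of the argument, and the step I expect to be the main obstacle.

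\emph{Conclusion.} Once $X$ is known to be locally standard, $Q$ is a nice manifold with corners and the spectral sequence of the first paragraph is available. Equivariant formality forces $\dim_{\dt} H^*(X;\dt)$ to equal the number of vertices of $Q$, and a rigidity argument on that spectral sequence shows every face $F$ must satisfy $\Hr^*(F;\dt)=0$ — a face with nonvanishing reduced cohomology would contribute extra, unkillable classes — so $Q$ is mod $2$ face-acyclic, giving (i). For (ii), the hypothesis that $H^*(X;\dt)$ is generated by its degree-one part means the ring is determined by intersection products of the classes dual to the $\pi^{-1}(F_j)$; translating the vanishing pattern of these products back to the combinatorics of $Q$ says exactly that nonempty intersections of facets are connected faces and that the face poset is dual-to-simplicial, i.e.\ that $Q$ is a mod $2$ homology polytope, and conversely this combinatorial structure makes the face-ring description of $H^*(X;\dt)$ degree-one generated. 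Besides the global exclusion of bad isotropy, the other delicate point is setting up the face-poset spectral sequence cleanly enough that both the collapse and the rigidity arguments can be run on it.
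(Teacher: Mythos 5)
This statement is Theorem~\ref{thm:ComplZero}, which the paper imports from \cite{YuEqForm} without proof, so there is no internal argument to compare against; your proposal has to stand on its own. As an outline it follows the same Masuda--Panov-style strategy that Yu actually uses (characteristic submanifolds, a face-by-face computation of cohomology, the face ring for part (ii)), but two of the steps you flag as delicate are genuinely incomplete, and in one case the mechanism you propose is the wrong one. For ``equivariantly formal $\Rightarrow$ locally standard,'' you suggest that a non-standard isotropy stratum would ``push $\dim_{\dt}H^*(X;\dt)$ strictly above $|X^{\dt^n}|$'' by Smith--Floyd bookkeeping. Floyd's inequality \eqref{eqSmithMain} runs in the opposite direction ($\dim H^*(X^{\dt^n})\le\dim H^*(X)$), and a bad slice representation does not feed into either side of that count in any direct way, so no contradiction falls out of it. The correct route --- visible in Lemma~\ref{Lem:Induced-Formality} and in the proof of Theorem 3.10 of this very paper --- is: equivariant formality forces every connected component $N$ of every $X^H$ to contain a global fixed point; the tangent representation at that fixed point is standard by effectiveness; the isotropy representation of $H$ is locally constant along $N$; hence $\operatorname{Stab}(x)$ is a coordinate subtorus with standard slice for \emph{every} $x$. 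You have all the ingredients (you use the fixed-point lemma for the characteristic submanifolds) but never apply them to an arbitrary isotropy group, which is exactly the step you admit is missing.

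The second gap is the ``rigidity argument'' for face-acyclicity: you assert that a face with nonvanishing reduced cohomology ``would contribute extra, unkillable classes'' in the face-filtration spectral sequence, but classes on the $E_1$ page can of course be killed by differentials, so some additional input is needed to rule that out. The standard fix is to avoid a bare counting argument: each face submanifold $X_F$ is itself an equivariantly formal $2$-torus manifold by Lemma~\ref{Lem:Induced-Formality}, and one uses the exactness of the Atiyah--Bredon--Franz--Puppe sequence \eqref{eqABseqForX} (or an induction on the dimension of faces) to deduce $\Hr^*(F;\dt)=0$ for every face, rather than hoping the spectral sequence is rigid. Your treatment of the converse direction and of part (ii) (degree-one generation $\Leftrightarrow$ homology polytope via the face ring) is the right idea and matches the known proof, but as written the proposal does not close either of the two hard implications.
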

\section{2-torus actions of complexity one in general position}
In this section we show that an orbit space of certain $2$-torus action of complexity one is a topological manifold. This section extends \cite{AyzCompl} to $2$-torus actions.

Let a  $2$-torus $\dt^{n-1}$ act effectively on a connected closed smooth manifold $X=X^n$ with nonempty set of fixed points. 

For a fixed point $x\in X^{\dt^{n-1}}$ of the action we the have tangent representation of $\dt^{n-1}$ at $x$.  Consider $\alpha_{x,1},\ldots,\alpha_{x,n}\in \Hom(\dt^{n-1},\dt)\cong \Z_2^{n-1}$, the weights of the tangent representation at $x$, i.e. 
$$
T_x X \cong V(\alpha_{x, 1}) \oplus \ldots \oplus V(\alpha_{x, 1})
$$
where $V(\alpha_{x, 1})$ is a 1-dimensional real representation given by $t \cdot y = \alpha_{x,1}(t) y $ for $t \in \dt^{n-1}$ and $y \in \mathbb{R}$.
\begin{defin} \label{GenPos}
The action is said to be in \emph{general position} if  for any fixed point $x$, any $n-1$ of the weights $\alpha_{x,1},\dotsc,\alpha_{x,n}$ are linearly independent over $\dt$.
\end{defin}
\begin{rem}
   From The Slice Theorem it follows that all weights of an action are non-zero if and only if the fixed point set is discrete. Hence, if $X$ is compact, then the fixed point set is finite. 
\end{rem}

Let an action be in general position. Since any $n-1$ of the weights  $\alpha_{x,1},\dotsc,\alpha_{x,n} \in \dt^{n-1}$ are linearly independent, we have $ \alpha_{x,1}+\dotsc+\alpha_{x,n} = 0$. Hence, for any $t \in \dt^{n-1}$ we have  
\begin{equation}\label{eqGenPos}
    \Pi_{i=1}^n \alpha(t)_{x,i} = 1.
\end{equation}

Moreover, the condition of general position implies that the tangent representation at any fixed point is faithful. This motivates the following construction.

Let $G$ be a subgroup of $\mathbb{Z}^{n}_2$ consisting of elements of the following form: 
\begin{equation} \label{GenPosCord}
   G = \{ (g_1, \dots, g_n) \in \dt^n : \Pi_{i=1}^n g_i = 1 \},
\end{equation} where $g_i \in\{-1,1\}$. Since $\dt^n$ acts coordinate-wise on $\R^n$, we have an induced action of $G$ on $\R^n$, which we call \textit{the standard complexity one action}. 

We show below that the orbit space of the standard complexity one action is homeomorphic to $\R^n$. Moreover, $G$ is the unique subgroup of $\dt^n$ of index 2 such that $\R^n/G \cong \R^n$.

Let $\chi: \dt^{n-1} \to GL_n(\R)$  be an action in general position with the weights $\alpha_1, \dots, \alpha_n$. Define $\phi: \dt^{n-1} \to G$ by the following formula $\phi(t) = (\alpha_1(t),\dots, \alpha_n(t))$. Since the action in general position is faithful representation and $\eqref{eqGenPos}$ holds, we have that $\phi$ is an isomorphism and the following diagram commutes:  
\begin{equation} \label{IsoGenPos}
    \begin{tikzcd}
\dt^{n-1} \arrow{r}{\chi} \arrow[swap]{d}{\phi} & GL_n(\R)\\
G  \arrow{ur}{\psi}
\end{tikzcd}
\end{equation}
Where $\chi$ is the action in general position and $\psi$ is the standard complexity one action of $G$ on $\R^n$ by coordinates, i.e. $g \cdot x = (g_1x_1, \dots, g_nx_n)$. Hence, we get the following:

\begin{prop} \label{StandardGenPos}
   Let $\dt^{n-1}$-action on $\R^n$ be in general position. Consider $G$ as in \eqref{GenPosCord}. Then this action is weakly equivalent to the standard complexity one action of $G$ on $\R^n$,  i.e. diagram  $\eqref{IsoGenPos}$ commutes.
\end{prop}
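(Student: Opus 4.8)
The plan is to use the fact that every finite-dimensional real representation of the elementary abelian $2$-group $\dt^{n-1}$ splits into one-dimensional subrepresentations, each carrying a character in $\Hom(\dt^{n-1},\dt)$, and then to match the resulting combinatorial data with the coordinate-wise action of $G$.

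First I would decompose the representation. Any real representation $\chi\colon\dt^{n-1}\to GL_n(\R)$ is a direct sum of irreducibles, and every irreducible real representation of $\dt^{n-1}$ is one-dimensional of the form $V(\alpha)$ for a character $\alpha\in\Hom(\dt^{n-1},\dt)$. Hence $\R^n$ decomposes $\dt^{n-1}$-equivariantly as $V(\alpha_1)\oplus\cdots\oplus V(\alpha_n)$, where $\alpha_1,\dots,\alpha_n$ are exactly the weights of Definition \ref{GenPos}. Choosing a nonzero vector $e_i\in V(\alpha_i)$ for each $i$ gives a basis of $\R^n$ in which $\chi(t)$ is the diagonal matrix $\mathrm{diag}(\alpha_1(t),\dots,\alpha_n(t))$ for every $t\in\dt^{n-1}$; passing to this basis is the linear change of coordinates that will witness the weak equivalence.

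Next I would study the homomorphism $\phi\colon\dt^{n-1}\to\dt^n$, $\phi(t)=(\alpha_1(t),\dots,\alpha_n(t))$. It is a group homomorphism, and by \eqref{eqGenPos} its image lies in $G$. For injectivity, the general position hypothesis says any $n-1$ of the $\alpha_i$ are linearly independent in $\Hom(\dt^{n-1},\dt)\cong\dt^{n-1}$ and hence form a basis; in particular $t\mapsto(\alpha_1(t),\dots,\alpha_{n-1}(t))$ is an isomorphism $\dt^{n-1}\toiso\dt^{n-1}$, so $\phi$ is injective. Since $|G|=2^{n-1}=|\dt^{n-1}|$, the injection $\phi$ is a bijection, i.e. an isomorphism onto $G$.

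Finally I would combine the two steps: in the chosen weight basis, $\chi(t)=\mathrm{diag}(\alpha_1(t),\dots,\alpha_n(t))=\psi(\phi(t))$, where $\psi\colon G\to GL_n(\R)$ is the coordinate-wise action $g\cdot x=(g_1x_1,\dots,g_nx_n)$. Thus $\chi=\psi\circ\phi$ after this change of basis, diagram \eqref{IsoGenPos} commutes, and the $\dt^{n-1}$-action is weakly equivalent to the standard complexity one action of $G$. I do not expect a genuine obstacle here; the only points needing care are that "diagram \eqref{IsoGenPos} commutes" is to be read up to the linear change of coordinates adapting $\R^n$ to the weight decomposition — precisely the freedom built into weak equivalence — and that it is the faithfulness forced by general position that upgrades $\phi$ from a mere homomorphism to an isomorphism.
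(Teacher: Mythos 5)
Your proposal is correct and follows essentially the same route as the paper: decompose the representation into one-dimensional weight spaces, observe that \eqref{eqGenPos} places $\phi(t)=(\alpha_1(t),\dots,\alpha_n(t))$ in $G$, and use faithfulness (which you derive explicitly from the general position condition) plus a counting argument to see that $\phi$ is an isomorphism making diagram \eqref{IsoGenPos} commute. Your treatment is in fact slightly more detailed than the paper's, which asserts the isomorphism directly from faithfulness and \eqref{eqGenPos} without spelling out the weight-basis change of coordinates.
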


In the following proposition we prove that  all stabilizers of a $2$-torus action in general position are generated by \textit{rotations}, i.e. by orthogonal transformations whose fixed-point subspace has codimension two. 
\begin{prop} \label{StabOfG}
Consider  the standard complexity one action of $G$ on $\R^n$. Let $H = Stab(x)$ be the stabilizer subgroup of any $x \in \R^n$. Then the orbit space $\R^n/H$ is homeomorphic to $\R^n$. Moreover, $G$ is the only one subgroup of $\mathbb{Z}^{n}_2$ of index $2$ such that $\R^n/G \cong \R^n$.
\end{prop}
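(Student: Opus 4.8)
The plan is to reduce both assertions to a single external input: by the results of C.\,Lange and M.\,Mikhailova \cite{Mikh}, \cite{La19}, the orbit space of a complexity one representation of a finite group in general position is homeomorphic to a Euclidean space. First I would check that the standard complexity one action qualifies: here $G\cong\dt^{n-1}$, the weights are the coordinate characters $\alpha_i(g)=g_i$, and any $n-1$ of them are linearly independent, since forgetting one coordinate restricts to an isomorphism $G\cong\dt^{n-1}$ (both groups have order $2^{n-1}$, and injectivity is immediate from $\prod_i g_i=1$). Hence $\R^n/G\cong\R^n$; applying the same fact in lower dimension, for every $k$ the standard complexity one action of $G_{(k)}=\{h\in\dt^k:\prod_i h_i=1\}$ on $\R^k$ has quotient $\cong\R^k$ (trivially when $k\le 1$).

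Next, for the statement on stabilizers, fix $x\in\R^n$, put $Z=\{i:x_i=0\}$ and $k=|Z|$. An element $(g_1,\dots,g_n)\in G$ fixes $x$ precisely when $g_i=1$ for all $i\notin Z$, so $H=Stab(x)=\{g\in G: g_i=1\text{ for }i\notin Z\}$; under the coordinate splitting $\R^n=\R^Z\times\R^{\{1,\dots,n\}\setminus Z}$ this $H$ is carried isomorphically onto $G_{(k)}$ acting by the standard complexity one action on the $\R^Z$-factor and trivially on the other. (In particular $H$ is generated by the double sign changes, which are rotations, matching the preceding discussion.) Since orbit spaces commute with products of actions, the first paragraph gives $\R^n/H\cong(\R^Z/G_{(k)})\times\R^{\,n-k}\cong\R^k\times\R^{\,n-k}=\R^n$.

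Finally, for uniqueness I would use that the index $2$ subgroups of $\dt^n$ are exactly the kernels $G_S=\{g\in\dt^n:\prod_{i\in S}g_i=1\}$ of the nontrivial characters, one for each nonempty $S\subseteq\{1,\dots,n\}$, with $G=G_{\{1,\dots,n\}}$. If $S$ is proper and nonempty and $T=\{1,\dots,n\}\setminus S$, then as a subgroup of $\dt^n=\dt^S\times\dt^T$ we have $G_S=G_{(|S|)}\times\dt^T$, where $\dt^T$ acts on $\R^T$ by all coordinate reflections; therefore
\[
\R^n/G_S\;\cong\;(\R^S/G_{(|S|)})\times(\R^T/\dt^T)\;\cong\;\R^{|S|}\times\Rg^{\,|T|}\;\cong\;\Rg\times\R^{\,n-1},
\]
a closed half-space (using $\R^T/\dt^T\cong\Rg^{|T|}$ via $x\mapsto(|x_i|)$ and $\Rg^m\cong\Rg\times\R^{m-1}$). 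This space has nonempty boundary, hence is not homeomorphic to $\R^n$, so $G=G_{\{1,\dots,n\}}$ is indeed the only index $2$ subgroup with $\R^n/G\cong\R^n$. The one genuinely non-routine ingredient is the Lange--Mikhailova theorem cited at the outset; granting it, the remainder is the combinatorics of $G$, the product formula for orbit spaces, and the standard fact that a manifold with nonempty boundary is not homeomorphic to $\R^n$.
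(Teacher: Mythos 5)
Your proof is correct. For the first assertion you follow essentially the paper's route: identify the stabilizer $H$ of $x$ as the ``product equal to one'' subgroup supported on the zero coordinates of $x$ (equivalently, the subgroup generated by double sign changes, i.e.\ rotations) and feed it to the Lange--Mikhailova theorem; the only cosmetic difference is that you split off the trivial $\R^{n-k}$ factor and apply the external result to $G_{(k)}$ acting on $\R^k$, whereas the paper applies Lange's Theorem~A directly to $H$ acting on all of $\R^n$. The uniqueness claim is where you genuinely diverge. The paper invokes the converse direction of Lange's theorem --- if $\R^n/H\cong\R^n$ for a linear action then $H$ must be generated by rotations --- and then asserts, without verification, that $G$ is the unique index-$2$ subgroup of $\dt^n$ generated by rotations. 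You instead enumerate all index-$2$ subgroups as the kernels $G_S$ of the nontrivial characters $\chi_S(g)=\prod_{i\in S}g_i$ and compute directly that for proper nonempty $S$ one has $\R^n/G_S\cong\R^{|S|}\times\Rg^{|T|}\cong\Rg\times\R^{n-1}$, a half-space with nonempty boundary, hence not $\R^n$. Your route buys self-containedness: it needs only the ``if'' direction of the external theorem (and only for the standard complexity-one representations), and it replaces the paper's unproved combinatorial assertion by an explicit computation whose ingredients ($\R^T/\dt^T\cong\Rg^{|T|}$, the orthant being a half-space, invariance of the boundary) are all elementary. The paper's route is shorter on the page but leans on the deeper characterization half of Lange's result. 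Both arguments are valid.
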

\begin{proof}
Let us describe the stabilizer $H$ of a point $x \in \R^n$. For $x = (x_1, \dots, x_n) \in \R^n$ let 
$$I = \{ i \in \{1, \dots, n\} : i \in I \text{ if } x_i =0\}=\{i_1, \dots, i_k\}$$ be the set of indices with zero coordinates of $x$. If $|I| < 2$, then the stabilizer subgroup is trivial, hence we can assume that $|I| \geq 2$.  Consider the following subgroup of $\dt^n$: 
\begin{equation}
    \dt^I = (\dt, 1)^I = \{(g_1, \dots, g_n) \in \dt^n : g_i \in \dt \text{ if } i \in I, \text{ otherwise } g_i = 1\}.
\end{equation}

We have that $H = \dt^I \cap G$. Taking $g_{i_1}, \dots, g_{i_k}$ as generators of $\dt^I$, we have that $H = \{(g_1, \dots, g_k) \in \dt^I : \prod^{k}_{1} g_i = 1 \}$. This means that $H$ generated by rotations. It follows from \cite[Thm. A]{La19} that the orbit space of this action is homeomorphic to $\R^n$. From this theorem also follows that any subgroup $H$ of $\dt^n$ such that $\R^n/H$ is homeomorphic to $\R^n$ must be generated by rotations, but $G = \{ (g_1, \dots, g_n) : \Pi_{i=1}^n g_i = 1 \}$ is the only subgroup of index $2$ generated by rotations. 
\end{proof}
\begin{cor} \label{StabOfGenPos}
Suppose  $\dt^{n-1}$ action on $\R^n$ is in general position.  Then $\R^n/Stab(x) \cong \R^n$ for the stabilizer subgroup $Stab(x)$ of any $x \in \R^n$. 
\end{cor}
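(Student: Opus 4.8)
The plan is to deduce this immediately by combining Propositions~\ref{StandardGenPos} and~\ref{StabOfG}. First I would invoke Proposition~\ref{StandardGenPos}: a general position action $\chi\colon\dt^{n-1}\to GL_n(\R)$ with weights $\alpha_1,\dots,\alpha_n$ is weakly equivalent to the standard complexity one action $\psi$ of $G$ via the isomorphism $\phi\colon\dt^{n-1}\toiso G$, $\phi(t)=(\alpha_1(t),\dots,\alpha_n(t))$, with diagram~\eqref{IsoGenPos} commuting, i.e.\ $\chi(t)=\psi(\phi(t))$ in $GL_n(\R)$ for all $t\in\dt^{n-1}$.

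Next I would observe that this weak equivalence, being realized by a group isomorphism together with the identity map on the underlying space $\R^n$, carries stabilizers to stabilizers and orbits to orbits. Concretely, for $x\in\R^n$ and $t\in\dt^{n-1}$ one has $\chi(t)x=x$ if and only if $\psi(\phi(t))x=x$, so $\phi$ restricts to an isomorphism $Stab_\chi(x)\toiso Stab_\psi(x)$; likewise the orbit $\dt^{n-1}x$ coincides with the orbit $Gx$ as a subset of $\R^n$. Hence the partition of $\R^n$ into $Stab_\chi(x)$-orbits equals the partition into $Stab_\psi(x)$-orbits, and therefore the orbit spaces agree as topological spaces: $\R^n/Stab_\chi(x)=\R^n/Stab_\psi(x)$.

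Finally I would apply Proposition~\ref{StabOfG} to the $G$-action $\psi$: for every $x\in\R^n$ the stabilizer $Stab_\psi(x)=\dt^I\cap G$ (with $I$ the set of zero coordinates of $x$) is generated by rotations and $\R^n/Stab_\psi(x)\cong\R^n$. Combining with the previous step yields $\R^n/Stab_\chi(x)\cong\R^n$, which is exactly the assertion. I do not expect a genuine obstacle here; the only point requiring care is the bookkeeping in the middle step — verifying that ``weak equivalence'' in the sense of~\eqref{IsoGenPos} really does identify the two quotient spaces — and this reduces to the fact that the homeomorphism of $\R^n$ appearing in that weak equivalence is the identity, so the orbits are literally the same sets and passing to the subgroup $Stab(x)$ does not change this.
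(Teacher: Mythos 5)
Your proposal is correct and follows essentially the same route as the paper: the paper's own proof simply cites the commutativity of diagram~\eqref{IsoGenPos} (Proposition~\ref{StandardGenPos}) to reduce to the standard complexity one action and then applies Proposition~\ref{StabOfG}. Your additional bookkeeping -- checking that the weak equivalence is the identity on $\R^n$ and hence identifies stabilizers and quotients -- is a careful elaboration of what the paper leaves implicit, not a different argument.
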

\begin{proof}
  Since action is in general position, we have that the diagram \eqref{IsoGenPos} commutes. Therefore, the action is weakly equivalent to the action in Proposition \ref{StabOfG}. 
\end{proof}

\begin{rem}
From the previous corollary it follows that the stabilizer subgroup of an action in general position can not be arbitrary. For example, it can not be $H = \{(1,1,1,1), (-1,-1,-1,-1)\}$ and, indeed, $\R^4/H$ is homeomorphic to the open cone over $\RP^3$. This example shows importance of the condition that subgroup is generated by rotations. Description of all linear representations of  finite groups whose orbit spaces are homeomorphic to $\R^n$ can be found in \cite{La19}.
\end{rem}

For the global statement of the previous corollary we need the following condition on an action: 
\begin{equation}\label{eqCond}
\mbox{Every connected component of }X^H\mbox{ has a global fixed point,}
\end{equation}
where $X^H = \{ x \in X : H \subset Stab(x)$\}.

The following lemma shows that this condition holds for equivariantly formal action of $2$-torus.
\begin{lem}[{\cite[Lem. 3.2]{YuEqForm}}] \label{Lem:Induced-Formality}
   Suppose a $\Z_2^k$-action on a compact manifold $X$ is equivariantly formal. Then for every subgroup $H$ of $\Z_2^k$, the induced action of $\Z^k_2$( or $\dt^k/H$)  on every connected component $N$ of $X^H$ is equivariantly formal, hence $N$ has a $\Z^k_2$-fixed point.  
 \end{lem}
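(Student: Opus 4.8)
The plan is to derive the statement from Floyd's inequality (Theorem~\ref{thm:Hsiang}) alone, applied three times, together with the tautology $X^{\dt^k}=(X^H)^{\dt^k/H}$, which holds because a point of $X$ fixed by $H$ is fixed by all of $\dt^k$ precisely when its class in $X^H$ is fixed by the induced action of $\dt^k/H$. Two preliminary remarks: since $X$ is a compact manifold and $H$ is finite, $X^H$ is again a compact manifold (a disjoint union of closed submanifolds of $X$), hence a space to which Theorem~\ref{thm:Hsiang} applies; and since $\dt^k$ is abelian it preserves $X^H$ and acts there through $\dt^k/H$.

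First I would assemble the chain of inequalities. Applying Theorem~\ref{thm:Hsiang} to the $H$-action on $X$ gives $\dim_{\dt}H^*(X^H;\dt)\le \dim_{\dt}H^*(X;\dt)$; applying it to the $\dt^k/H$-action on $X^H$ and using the tautology gives $\dim_{\dt}H^*(X^{\dt^k};\dt)\le \dim_{\dt}H^*(X^H;\dt)$. Together with equivariant formality of the $\dt^k$-action on $X$, i.e. $\dim_{\dt}H^*(X^{\dt^k};\dt)=\dim_{\dt}H^*(X;\dt)$, these combine into
\[
\dim_{\dt}H^*(X;\dt)=\dim_{\dt}H^*(X^{\dt^k};\dt)\le \dim_{\dt}H^*(X^H;\dt)\le \dim_{\dt}H^*(X;\dt),
\]
so both inequalities are in fact equalities; in particular the induced $\dt^k/H$-action on $X^H$ is equivariantly formal.

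The remaining and only delicate point is to descend to a single connected component $N$ of $X^H$, since a priori $\dt^k$ may permute the components of $X^H$. I would write $X^H=\bigsqcup_\alpha N_\alpha$ and group the components into $\dt^k$-orbits; each orbit-union $Y$ is a $\dt^k$-invariant subspace, $(X^H)^{\dt^k}=\bigsqcup_Y Y^{\dt^k}$, and $\dim_{\dt}H^*(X^H;\dt)=\sum_Y \dim_{\dt}H^*(Y;\dt)$. Floyd's inequality $\dim_{\dt}H^*(Y^{\dt^k};\dt)\le \dim_{\dt}H^*(Y;\dt)$ holds for each $Y$ separately, so the equality $\dim_{\dt}H^*(X^{\dt^k};\dt)=\dim_{\dt}H^*(X^H;\dt)$ just obtained forces $\dim_{\dt}H^*(Y^{\dt^k};\dt)=\dim_{\dt}H^*(Y;\dt)$ for every orbit-union $Y$. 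But if an orbit consists of more than one component then $Y^{\dt^k}=\emptyset$, whereas $\dim_{\dt}H^*(Y;\dt)\ge \dim_{\dt}H^0(Y;\dt)\ge 2$, a contradiction. Hence every component $N$ of $X^H$ is $\dt^k$-invariant and satisfies $\dim_{\dt}H^*(N^{\dt^k};\dt)=\dim_{\dt}H^*(N;\dt)$, i.e. carries an equivariantly formal $\dt^k$-action (equivalently $\dt^k/H$-action). Finally, since $N$ is a nonempty connected compact manifold, $\dim_{\dt}H^*(N;\dt)\ge \dim_{\dt}H^0(N;\dt)=1$, so $\dim_{\dt}H^*(N^{\dt^k};\dt)\ge 1$ and $N^{\dt^k}\ne\emptyset$; that is, $N$ contains a $\dt^k$-fixed point.

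I expect the book-keeping of the last paragraph to be the crux: the argument hinges on Floyd's inequality being applicable orbit-union by orbit-union, which is exactly what upgrades the single global equality $\dim_{\dt}H^*(X^{\dt^k};\dt)=\dim_{\dt}H^*(X^H;\dt)$ into both the $\dt^k$-invariance of each component of $X^H$ and its individual equivariant formality. An alternative route is to filter $H$ by a chain $0=H_0\subset H_1\subset \dotsb\subset H_m=H$ with $H_j/H_{j-1}\cong\dt$ and induct, reducing to the case $H\cong\dt$; but the direct argument above requires no induction.
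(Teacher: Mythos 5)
Your argument is correct. Note that the paper itself gives no proof of this lemma: it is imported verbatim from Yu's paper \cite{YuEqForm} as Lemma 3.2 there, so there is no in-paper argument to compare against. What you have written is a complete, self-contained proof, and it is essentially the standard one for statements of this type: the tautology $X^{\dt^k}=(X^H)^{\dt^k/H}$ plus two applications of Floyd's inequality (Theorem \ref{thm:Hsiang}), once for the $H$-action on $X$ and once for the $\dt^k/H$-action on $X^H$, squeezes $\dim_{\dt}H^*(X^H;\dt)$ between two copies of $\dim_{\dt}H^*(X;\dt)$. Your handling of the one genuinely delicate point --- that $\dt^k$, being finite and hence disconnected, could a priori permute the components of $X^H$ --- is also right: applying Floyd orbit-union by orbit-union and using additivity of $\dim H^*$ over the (finitely many, since $X^H$ is a compact manifold) components forces termwise equality, and a nontrivially permuted orbit-union would have empty fixed set but $\dim H^0\ge 2$, a contradiction. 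The only hypotheses you should make explicit are that the action is smooth (so $X^H$ is a closed submanifold, as the paper assumes throughout) and that the paper's fixed-point-count definition of equivariant formality is the one in play, so that formality of the $\dt^k$-action on $N$ and of the induced $\dt^k/H$-action on $N$ coincide because their fixed sets do; neither is a gap, just bookkeeping. Your suggested alternative by induction on a filtration of $H$ with $\dt$ quotients is unnecessary, as you say.
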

Now we can prove the following theorem about the orbit space of complexity one actions in general position. This is the first part of Theorem \ref{Th1} from the introduction.

\begin{thm}
Let $\dt^{n-1}$ act on a connected closed smooth manifold $X = X^{n}$. Suppose that the action is in general position and the condition \eqref{eqCond} holds. 
Then the orbit space $Q = X/\dt^{n-1}$ is a topological manifold.    
\end{thm}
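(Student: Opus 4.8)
The plan is to show that $Q$ is locally Euclidean by exhibiting, around each orbit, a chart modeled on $\R^n$; Hausdorffness and second countability are inherited from $X$ since $X\to Q$ is an open quotient of a compact manifold. First I would stratify the argument by orbit type. Pick a point $x\in X$ with stabilizer $H=\mathrm{Stab}(x)$ and let $N$ be the connected component of $X^H$ containing $x$. By the Slice Theorem a $\dt^{n-1}$-invariant neighborhood of the orbit $\dt^{n-1}x$ is equivariantly diffeomorphic to $\dt^{n-1}\times_H V_x$, so the corresponding neighborhood of the orbit $[x]$ in $Q$ is homeomorphic to $V_x/H$. Thus it suffices to prove $V_x/H\cong \R^{n}$ for every $x$.

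The key point is that the $H$-representation $V_x$ is a restriction of a complexity-one-in-general-position representation and hence its orbit space is a copy of $\R^{\dim V_x}$. Concretely: by Proposition \ref{StandardGenPos} (via the commuting diagram \eqref{IsoGenPos}) the action of $\dt^{n-1}$ on a neighborhood of $x$ is weakly equivalent to the standard complexity one action of $G$ on $\R^n$, and under this identification $H$ corresponds to a stabilizer subgroup of the $G$-action on $\R^n$. Now invoke condition \eqref{eqCond}: $N$ has a global fixed point $p$, and near $p$ the tangent representation of $\dt^{n-1}$ is faithful and in general position by hypothesis, so it is weakly equivalent to the $G$-action on $T_pX\cong\R^n$; the subrepresentation $V_x$ is then the slice representation at a point of $\R^n$ whose stabilizer is exactly $H$ (under the isomorphism $\phi$). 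Corollary \ref{StabOfGenPos}, which is precisely Proposition \ref{StabOfG} transported through the isomorphism, then gives $\R^n/H\cong\R^n$; splitting off the trivial summand (the fixed subspace of $H$, which contains the directions along $N$) yields $V_x/H\cong\R^{\dim V_x}$, and reassembling, $\dt^{n-1}\times_H V_x$ has orbit space $\R^n$. Patching these charts over $Q$ and noting the transition maps are homeomorphisms (they come from restricting the quotient map on overlapping invariant opens) shows $Q$ is a topological $n$-manifold.

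The main obstacle is the bookkeeping that links the \emph{local} stabilizer $H$ of an arbitrary point $x$ to a stabilizer of the \emph{standard} $G$-action, i.e. making rigorous that the slice representation $V_x$ really is (weakly equivalent to) the slice of the standard complexity one action at the appropriate point. This is exactly where condition \eqref{eqCond} is needed: without a global fixed point on the component $N=X^H$, one has no ambient faithful general-position representation in which to locate $H$, and a priori $H$ could act on $V_x$ in a way not covered by Proposition \ref{StabOfG} (the Remark after Corollary \ref{StabOfGenPos} shows such "bad" subgroups genuinely exist as abstract stabilizers). So the crux is: use Lemma \ref{Lem:Induced-Formality}/condition \eqref{eqCond} to get $p\in N$ fixed by $\dt^{n-1}$; apply the general position hypothesis at $p$ to get faithfulness and the normal form; identify $H$ with the stabilizer of a point on $\R^n$ lying in the fixed subspace corresponding to $N$; and conclude via Corollary \ref{StabOfGenPos}.

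A minor point worth stating cleanly is the reduction $V_x/H\cong\R^{\dim V_x}$ from $\R^n/H\cong\R^n$: write $\R^n=\R^n_H\oplus W$ with $\R^n_H$ the $H$-fixed subspace (so $\R^n/H=\R^n_H\times(W/H)$), observe $\dim\R^n_H = \dim N$ (the normal directions to $N$ in $X$ at $p$ are exactly $W$, which is $V_x$ up to the finite data of $H$), and deduce $W/H\cong\R^{\dim W}$, hence $V_x/H\cong\R^{\dim V_x}$. Then the local model of $Q$ near $[x]$ is $(\dt^{n-1}\times_H V_x)/\dt^{n-1}=V_x/H\cong\R^{\dim V_x}$; but $\dim V_x = n - \dim(\dt^{n-1}x)=n$ is wrong only if the orbit is positive-dimensional, which cannot happen for a finite group, so in fact $V_x=T_xX$ has dimension $n$ and the chart is $\R^n$, as required.
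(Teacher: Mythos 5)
Your proof is correct and follows essentially the same route as the paper: Slice Theorem to reduce to the local model $V_x/H$, condition \eqref{eqCond} to locate a global fixed point on the component $N$ of $X^H$ so that $H$ is realized as a stabilizer of a general-position representation on $\R^n$, and Corollary \ref{StabOfGenPos} to conclude $\R^n/H\cong\R^n$. The detour about splitting off the $H$-fixed summand is unnecessary, as you note yourself at the end: since the group is finite, $V_x=T_xX$ is all of $\R^n$ and the corollary applies directly.
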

\begin{proof}
   Let us denote $G = \dt^{n-1}$. Let $H = Stab(x)$ be the stabilizer subgroup of any $x$ in $X$, or equivalently  $x \in X^{(H)}$. Hence, $x \in N$, where $N$ is a connected component of $X^H$. It follows from the condition \eqref{eqCond} that $N$ has a global fixed point $x'$. By The Slice Theorem there exist a $G$ - equivariant neighborhood $U(x')$ of $x'$ such that $U(x')$ is $G$-diffeomorphic to $T_{x'}X$. Since the tangent representation of $H$ at $x$ depends only on a connected component of $X^H$, we can assume that $x$ is near $x'$, i.e. $x \in U(x')$. Therefore, $H$ is a stabilizer subgroup of an action $\dt^{n-1}$ on $\R^n$ in general position.  By The Slice Theorem every orbit has a $G$-equivariant neighborhood $U$ such that $U$ is $G$-equivariantly homeomorphic to $G \times_{H} V_x$, where $V_x = T_{x}X/T_{x}Gx$. Since the orbit $Gx$ is a discrete set, we have $V_x = T_{x}X$.  Therefore, $U/G \cong T_{x}X/H \cong \R^n$ by Corollary \ref{StabOfGenPos}. 
\end{proof}

\section{Equivariantly formal actions of complexity one}
In this section we show that the orbit space of an equivariantly formal action in general position is a $\dt$-homology sphere.  First of all, we introduce the notion of a face.

For an action of $\dt^{n-1}$ on $X$ consider the equivariant filtration
\[
\varnothing=X_{-1}\subset X_0 \subset X_1\subset\cdots\subset X_{n-1}=X,
\]
where $X_i$ is the union of orbits of size at most $2^i$ and $X_{-1} = \varnothing$. Notice that
\[
X_i = \{ x \in X : rk (Stab(x)) \geq n-1-i\} = \bigsqcup_{rk H \geq n-1 - i} X^{(H)}
\]
and each $X^{(H)}$ is the disjoint union of submanifolds of X.
There is an orbit type filtration of the orbit space $Q = X/\dt^{n-1}$:
\[
\varnothing=Q_{-1}\subset Q_0 \subset Q_1\subset\cdots\subset Q_{n-1}=Q, 
\]
where $Q_i = X_i/\dt^{n-1}$ and $Q_{-1} = \varnothing$.
\begin{defin}
A closure of any connected component of $Q_i\setminus Q_{i-1}$  is called \emph{a face of rank i}.  For a face $F$ of rank $i$ we define $F_{-1} = F \cap Q_{i-1}$.
\end{defin}
    Let $F$ be a face of $Q$, consider the quotient map $p: X \to Q$,  denote the preimage of $F$  by $X_F = p^{-1}(F)$. Let $G_F \subset \dt^{n-1}$ be the non-effective kernel of the $\dt^{n-1}$-action on $X_F$. Then $X_F$ is a connected component of $X^{G_F}$, therefore $X_F$ is a submanifold of $X$.
\begin{defin}
Let $F$ be a face of $Q$. The preimage $X_F = p^{-1}(F)$ of $F$ is called \emph{face submanifold} corresponding to $F$. For a face submanifold $X_F$ we define $X_{F_{-1}} = X_F \cap X_i$.
\end{defin}
In the next proposition we show that each $X_F$ is an equivariantly formal $2$-torus manifold.
\begin{prop}
Let $\dt^{n-1}$ act on $X$ equivariantly formal and in general position. Let $X_F$ be the face submanifold corresponding to a face $F$ of rank $i < n-1 $. Then $\dim X_F  = i $, hence $X_F$ is a equivariantly formal $2$-torus manifold with an action $\dt^{n-1}/G_F$.
\end{prop}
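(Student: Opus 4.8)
The plan is to analyse $X_F$ near a $\dt^{n-1}$-fixed point via the Slice Theorem together with the general position hypothesis, and then to determine $\dim X_F$ by computing the size of a generic orbit of $X_F$ in two ways: locally (near the fixed point) and globally (on the open stratum of $F$).

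Since the action on $X$ is equivariantly formal and $X_F$ is a connected component of $X^{G_F}$, Lemma~\ref{Lem:Induced-Formality} applied to $G_F$ produces a $\dt^{n-1}$-fixed point $x'\in X_F$ and also shows that the induced $\dt^{n-1}/G_F$-action on $X_F$ is equivariantly formal. By the Slice Theorem a $\dt^{n-1}$-invariant neighbourhood of $x'$ in $X$ is equivariantly diffeomorphic to $T_{x'}X\cong V(\alpha_{x',1})\oplus\cdots\oplus V(\alpha_{x',n})$, so a neighbourhood of $x'$ in $X_F$ is identified with the $G_F$-fixed subspace $\bigoplus_{j\in S}V(\alpha_{x',j})$, where $S=\{\,j:\alpha_{x',j}|_{G_F}=0\,\}$. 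Hence $\dim X_F=|S|$, and $G_F$, being the kernel of the smooth action of $\dt^{n-1}$ on the connected manifold $X_F$, equals the kernel of this tangent representation, i.e.\ $G_F=\bigcap_{j\in S}\Ker\alpha_{x',j}$.

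Now I would pin down $|S|$. One cannot have $|S|=n$: then all weights vanish on $G_F$, but by general position any $n-1$ weights span $\Hom(\dt^{n-1},\dt)$, so $G_F$ would be trivial and $X_F=X$, contradicting $i<n-1$. Thus $|S|\le n-1$, so the weights $\{\alpha_{x',j}\}_{j\in S}$ are linearly independent, and therefore $[\dt^{n-1}:G_F]=2^{|S|}$. Consequently a point $v=(v_j)_{j\in S}$ near $x'$ with all $v_j\neq0$ has stabilizer exactly $G_F$ and orbit of size $2^{|S|}$, and such points form a dense open subset of a neighbourhood of $x'$ in $X_F$. On the other hand, writing $F=\overline C$ for a connected component $C$ of $Q_i\setminus Q_{i-1}$, the set $p^{-1}(C)$ is dense in $X_F=p^{-1}(\overline C)$ (the orbit map $p$, being the quotient by a compact group action, is closed, hence $\overline{p^{-1}(C)}=p^{-1}(\overline C)$), and every point of $p^{-1}(C)$ has orbit of size exactly $2^i$. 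The two dense sets meet near $x'$, and at a common point the orbit has size $2^{|S|}=2^i$, so $\dim X_F=|S|=i$.

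It follows that $\rk G_F=n-1-i$, so $\dt^{n-1}/G_F\cong\dt^i$ acts effectively on the closed connected smooth $i$-manifold $X_F$, non-freely since it fixes $x'$ (for $i\ge1$; the case $i=0$ being trivial), and equivariantly formally by Lemma~\ref{Lem:Induced-Formality}; this is exactly the assertion that $X_F$ is an equivariantly formal $2$-torus manifold. I expect the main obstacle to be the exact equality $\dim X_F=i$: the bound $\dim X_F\ge i$ and the other statements are soft, whereas the matching upper bound genuinely needs general position — it is what forces $\{\alpha_{x',j}\}_{j\in S}$ to be linearly independent, hence a generic local orbit to have size precisely $2^{|S|}$ — in combination with the density of $p^{-1}(C)$ in $X_F$.
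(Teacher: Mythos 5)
Your proof is correct and follows essentially the same route as the paper: produce a global fixed point on $X_F$ via equivariant formality, linearize at it by the Slice Theorem, and use general position to convert the rank-$i$ condition into the dimension count. The only cosmetic difference is that you compare generic orbit sizes ($2^{|S|}$ locally, via linear independence of the weights vanishing on $G_F$, versus $2^{i}$ from the filtration, with a careful density argument), whereas the paper compares two computations of $|G_F|$ ($2^{k-1}$ from the rotation structure of the stabilizer versus $2^{n-1-i}$ from $\rk F = i$) --- the same count in dual form.
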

\begin{proof}
Let $G_F$ be the non-effective kernel of the $\dt^{n-1}$-action on $X_F$, we have that 
\begin{equation}
    |G_F| = 2^{n-1-i},
\end{equation}
since $\rk F = i$. Consider $p^{-1}(F^{\circ}) = \{ x \in X_F: Stab(x) = G_F\}$ is an open subset of $X_F$. Let $x' \in X_F$ be a global fixed point of action $\dt^{n-1}$ on $X$. Let $U(x')$ be $\dt^{n-1}$-equivariant neighborhood of $x'$ such that $U(x')$ is $\dt^{n-1}$-equivariant diffeomorphic to $T_x'X$. We have that $p^{-1}(F^\circ) \cap U(x') \cong V$, where $V = \{x \in T_{x'}X : Stab(x) = G_F\}$ is a linear subspace such that for any $x = (x_1, \ldots, x_n) \in V$ we have $x_i=0$, where $i \in I \subset \{1, \ldots, n\}$, i.e. some coordinates of $x$ are zero. Let $|I| = k$ be the number of zero coordinates. $G_F$~is generated by rotations, since it is the stabilizer subgroup of the standard complexity one action. Therefore,~$G_F$~contains all $(g_1,\ldots,g_n) \in \dt^{n}$ such that  $g_j = -1$ for all $j \in J$, where $J \subset I$ and $|J|$ is even.  Hence, we have
\begin{equation}
    |G_F| = \binom{k}{0} + \binom{k}{2} + \dots + \binom{k}{2\lfloor \frac{k}{2} \rfloor }  = 2^{k-1}.
\end{equation}

On the other hand, we have $|G_F| = 2^{n-1-i}$, therefore $k = n-i$. Hence, $\dim X_F = n - k = i $ and $\rk \dt^{n-1}/G_F = i$. This action is equivariantly formal by Lemma \ref{Lem:Induced-Formality}.
\end{proof}
In this paper all cohomology groups are taken with coefficients in $\dt$.

\begin{cor} \label{FaceCohomology}
For any face $F$ of rank $i < n-1$, we have $H^*(F,F_{-1}) =H^*(D^i, \partial D^i)$.
\end{cor}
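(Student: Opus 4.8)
The plan is to deduce the corollary from the previous proposition by identifying $(F, F_{-1})$ with the orbit space of a face submanifold together with its proper subfiltration, and then invoking the complexity-zero theory. By the proposition, $X_F$ is an equivariantly formal $2$-torus manifold of dimension $i$ with the action of $\dt^{n-1}/G_F \cong \dt^i$, and its orbit space is exactly $F$ (with the induced orbit type filtration giving $F_{-1}$ as the union of lower-rank faces, i.e. the ``boundary'' $\partial F$ in the sense of Yu's theory). By Theorem \ref{thm:ComplZero}(i), equivariant formality of $X_F$ is equivalent to $X_F$ being locally standard with $F$ mod $2$ face-acyclic. Being mod $2$ face-acyclic means that $F$ itself and every face of $F$ has trivial reduced $\dt$-cohomology; in particular $\widetilde H^*(F) = 0$ and $\widetilde H^*(F_{-1}') = 0$ for each facet-type piece of $F_{-1}$.

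First I would make precise that $F_{-1} = F \cap Q_{i-1}$ is, under the orbit type identification, the topological boundary of the manifold-with-corners-like space $F$, and that its connected components of top stratum correspond to the ``facets'' of $F$ in Yu's mod $2$ homology language. Here I would use the earlier observation that $Q$ is a topological manifold (from the theorem in Section 3), so $F$, being the orbit space of the face submanifold $X_F$ under a complexity-zero action, is a topological $i$-manifold with boundary $F_{-1}$; more precisely $F \setminus F_{-1} = F^\circ$ is an open $i$-manifold and near $F_{-1}$ the space $F$ looks like $\R^i_{\geq 0} \times \cdots$ by the local standardness of $X_F$. Then $F_{-1}$ is itself a closed $(i-1)$-manifold (a union of face orbit spaces of rank $i-1$).

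Next I would compute $H^*(F, F_{-1})$ via the long exact sequence of the pair together with the mod $2$ face-acyclicity. Since $F$ is mod $2$ face-acyclic, $\widetilde H^*(F) = 0$, so $H^*(F, F_{-1}) \cong \widetilde H^{*-1}(F_{-1})$. Now I must show $F_{-1}$ has the $\dt$-cohomology of $S^{i-1}$. One route: $F_{-1}$ is the orbit space of $X_{F_{-1}} = X_F \cap X_{i-1}$, which is the union of the lower-rank face submanifolds; these face submanifolds are again equivariantly formal $2$-torus manifolds of dimension $< i$, whose orbit spaces are mod $2$ face-acyclic. Thus $F_{-1}$ is built from mod $2$ acyclic pieces glued along mod $2$ acyclic pieces, and a Mayer–Vietoris / nerve-type argument (exactly the combinatorial content of ``mod $2$ homology polytope'' vs.\ ``face-acyclic'' in Yu's paper) shows $\widetilde H^*(F_{-1}) = \widetilde H^*(S^{i-1})$. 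Combining, $H^*(F, F_{-1}) \cong \widetilde H^{*-1}(S^{i-1}) = H^*(D^i, \partial D^i)$.

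The main obstacle I anticipate is the middle step: proving that $F_{-1}$ is a $\dt$-homology $(i-1)$-sphere rather than just some mod $2$ acyclic-ish complex. The cleanest way around it is probably not Mayer–Vietoris bookkeeping but rather to note that $F$ is a \emph{topological $i$-manifold with boundary} (which we get from the Section 3 theorem applied to the face submanifold $X_F$, or by induction on rank) whose interior $F^\circ$ is $\dt$-acyclic by face-acyclicity; for a compact topological manifold with boundary that is $\dt$-acyclic, Lefschetz duality immediately forces $H^*(F, F_{-1}) \cong H_{i-*}(F) $ and hence $H^*(F,\partial F) \cong \dt$ concentrated in degree $i$, i.e. $H^*(D^i, \partial D^i)$. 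So the real work is establishing that $F$ is a compact topological manifold with boundary $F_{-1}$; I would prove this by the same slice-theorem argument as in Section 3, now applied fiberwise over the lower strata, using local standardness of $X_F$ from Theorem \ref{thm:ComplZero}(i) to see the boundary collar.
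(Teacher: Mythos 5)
Your proposal is correct and follows essentially the same route as the paper: apply the previous proposition to see $X_F$ as an equivariantly formal $2$-torus manifold over $F$, invoke Theorem \ref{thm:ComplZero} to get that $F$ is locally standard with mod $2$ face-acyclic orbit space, and conclude by Lefschetz duality for the pair $(F,F_{-1})$. The paper's proof is just a two-line version of this; your additional care in justifying that $F$ is a compact manifold with boundary $F_{-1}$ (so that Lefschetz duality applies) fills in a step the paper leaves implicit.
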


\begin{proof} 
From Theorem \ref{thm:ComplZero} it follows that $F$ is mod 2 face-acyclic. Hence, $$H^*(F,F_{-1}) =H^*(D^i, \partial D^i)$$ by Lefschetz duality.
\end{proof}

Now we introduce  Atiyah–Bredon–Franz–Puppe sequence for equivariant cohomology  

\begin{multline}\label{eqABseqForX}
0\to H^*_{\dt^{k}}(X)\stackrel{i^*}{\to} H^*_{\dt^{k}}(X_0)\stackrel{\delta_0}{\to}
H^{*+1}_{\dt^{k}}(X_1,X_0)\stackrel{\delta_1}{\to}\cdots\\\cdots
\stackrel{\delta_{k-2}}{\to}H^{*+k-1}_{\dt^{k}}(X_{k-1},X_{k-2})\stackrel{\delta_{k-1}}{\to}H^{*+k}_{\dt^{k}}(X,X_{k-1})\to 0,
\end{multline}
where $\delta_i$ is the connecting homomorphism in the long exact sequence of equivariant cohomology of the triple $(X_{i+1},X_i,X_{i-1})$ and $X_i$ is the union of orbits of size at most $2^i$.
If an $\dt^{k}$-action on $X$ is  equivariantly formal, then this sequence is exact. For the proof see \cite{ABPF1},\cite{ABPF2}. Also see \cite{ABPFtorus} for the torus case. 
\begin{thm}[{\cite[Thm.\,10.2]{ABPF1}}]
    Suppose that an action of $\dt^{k}$ on $X$ is equivariantly formal. Then sequence \eqref{eqABseqForX} is exact. 
\end{thm}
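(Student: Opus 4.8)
The plan is to reduce the asserted exactness of \eqref{eqABseqForX} to a vanishing statement in commutative algebra, namely the vanishing of low-degree local cohomology of the module $H^*_{\dt^k}(X)$. Throughout, set $R := H^*(B\dt^k;\dt) \cong \dt[t_1,\dots,t_k]$ with $\deg t_i = 1$ (polynomial, since $B\dt \simeq \RP^\infty$ and we use $\dt$-coefficients), and write $\mathfrak m = (t_1,\dots,t_k)$ for the irrelevant ideal. Because $X$ is compact, every term in \eqref{eqABseqForX} is a finitely generated graded $R$-module and every $\delta_i$, as well as $i^*$, is $R$-linear; reading the sequence with the displayed shifts built in, it is a complex $C^\bullet$ of graded $R$-modules with degree-preserving differentials, with $C^{-1} = H^*_{\dt^k}(X)$ and $C^i = H^{*+i}_{\dt^k}(X_i,X_{i-1})$ for $0 \le i \le k$. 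The first input is that, by Theorem \ref{EqFormalityTh}, the Serre spectral sequence of the Borel fibration degenerates at $E_2$; a Leray--Hirsch argument then upgrades this to the statement that $H^*_{\dt^k}(X)$ is a \emph{free} graded $R$-module (of rank $\dim_\dt H^*(X;\dt)$), hence a maximal Cohen--Macaulay $R$-module of Krull dimension $k$, so $\operatorname{depth}_{\mathfrak m} H^*_{\dt^k}(X) = k$.

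Next I would analyze the $R$-module structure of the relative terms via the stratification of \S2. The subquotient $X_i \setminus X_{i-1}$ is the union of the orbit-type strata $X^{(H)}$ with $\rk H = k-i$; by the Slice Theorem each such stratum has an equivariant tubular neighbourhood fibred over a union of components of $X^H$ with fibre a faithful linear $\dt^k/H$-representation. Excision and the Thom isomorphism then present $H^*_{\dt^k}(X_i,X_{i-1})$ as a direct sum, indexed by components $N$ of the relevant $X^H$, of suitably induced and degree-shifted copies of $H^*_{\dt^k/H}(N)$; as an $R$-module each such summand is killed by the kernel of $R \twoheadrightarrow H^*(B(\dt^k/H);\dt)$, a prime of height $i$. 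Thus $C^i$ is supported in $\operatorname{Spec} R$ in codimension exactly $i$. Moreover, inverting all of $\mathfrak m$ collapses the orbit filtration of $X$ onto that of the finite set $X^{\dt^k}$, so $C^\bullet$ is acyclic away from $\mathfrak m$. A complex with these two properties — $i$-th term supported in codimension $i$, acyclic off $\mathfrak m$ — is precisely a Cousin/Gersten-type resolution: Atiyah's filtration argument, in the precise form of Franz--Puppe, produces a hypercohomology spectral sequence identifying the cohomology of $C^\bullet$ at the $i$-th spot, for $1 \le i \le k$, with the local cohomology module $H^i_{\mathfrak m}\!\big(H^*_{\dt^k}(X)\big)$ up to a degree shift, and the cohomology at the $(-1)$-st and $0$-th spots with $H^0_{\mathfrak m}$ and $H^1_{\mathfrak m}$ respectively. (In particular $\ker i^* = H^0_{\mathfrak m}(H^*_{\dt^k}(X))$, which also matches the localization theorem description of $\ker i^*$ as the submodule of $R$-torsion elements.)

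Finally I would conclude by Cohen--Macaulayness: since $\operatorname{depth}_{\mathfrak m} H^*_{\dt^k}(X) = k$, Grothendieck's vanishing below the depth gives $H^i_{\mathfrak m}\!\big(H^*_{\dt^k}(X)\big) = 0$ for all $i < k$, so $C^\bullet$ is exact at every spot except possibly the top one $C^k = H^{*+k}_{\dt^k}(X,X_{k-1})$; but $\delta_{k-1}$ is surjective onto that group because it is the last differential in the exact-triangle telescope of the orbit filtration (nothing lies to its right), so \eqref{eqABseqForX} is exact throughout. The step that carries the real weight is the middle one: pinning down the $R$-module structure and supports of the relative terms $H^*_{\dt^k}(X_i,X_{i-1})$ from the Slice Theorem and the Thom isomorphism, and then establishing the homological fact that a complex with those support and generic-acyclicity properties computes $\mathfrak m$-local cohomology. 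The remaining ingredients are either already available in the excerpt (equivariant formality $\Leftrightarrow$ freeness, via Theorem \ref{EqFormalityTh}, and the Slice Theorem) or are standard graded commutative algebra (depth equals dimension for Cohen--Macaulay modules, and vanishing of local cohomology below the depth).
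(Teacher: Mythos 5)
The paper offers no proof of this statement: it is quoted directly from \cite[Thm.~10.2]{ABPF1} (see also \cite{ABPF2} and, for the torus case, \cite{ABPFtorus}), so there is no internal argument to measure your sketch against. Judged on its own, your outline does sit in the right neighborhood --- the published proofs do run through freeness of $H^*_{\Z_2^k}(X)$ over $R=H^*(B\Z_2^k;\Z_2)$ together with a homological analysis of the orbit filtration --- but the step you yourself flag as ``carrying the real weight'' is not merely heavy, it \emph{is} the theorem, and your formulation of it is internally inconsistent. You assign $H^1_{\mathfrak m}$ to both the $0$-th and the $1$-st spots of the complex, and you assign $H^k_{\mathfrak m}\bigl(H^*_{\Z_2^k}(X)\bigr)$ to the top spot; but for a nonzero free (hence maximal Cohen--Macaulay) module the top local cohomology $H^k_{\mathfrak m}$ does \emph{not} vanish, so if your identification were literally correct the sequence could not be exact at $H^{*+k}_{\Z_2^k}(X,X_{k-1})$, contradicting the very statement you are proving. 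What \cite{ABPF1} actually establishes is an isomorphism between the cohomology of the Atiyah--Bredon complex and $\operatorname{Ext}^*_R$ of \emph{equivariant homology} (related to local cohomology only through local duality and a Matlis dual), and that dualization is precisely what repairs the top spot: the relevant module is free for an equivariantly formal action, all positive $\operatorname{Ext}$ groups vanish, and exactness follows everywhere at once. Proving that identification occupies most of the cited paper and cannot be dispatched as ``a Cousin/Gersten-type resolution.''

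Two further concrete defects. First, surjectivity of $\delta_{k-1}$ is not free: ``nothing lies to its right'' only says the complex stops there, whereas surjectivity is equivalent, via the long exact sequence of the triple $(X,X_{k-1},X_{k-2})$, to the vanishing of the image of $H^{*+k}_{\Z_2^k}(X,X_{k-1})\to H^{*+k}_{\Z_2^k}(X,X_{k-2})$, which requires an argument. Second, the annihilator (up to radical) of a summand of $H^{*+i}_{\Z_2^k}(X_i,X_{i-1})$ attached to a stabilizer $H$ of rank $k-i$ is the kernel of $R\to H^*(BH;\Z_2)$, not of $R\to H^*(B(\Z_2^k/H);\Z_2)$; the former has height $i$, as your support computation needs, while the latter has height $k-i$. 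The codimension-$i$ support conclusion you draw is right, but the ideal you name does not give it.
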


From exactness of this sequence and previous corollary we immediately get the following result: 
\begin{prop} \label{RelHom}
    Let $\dt^{n-1}$ act on $X$ equivariantly formal and in general position. Then for the orbit space $Q$ we have $H^{i}(Q,Q_{n-2}) = 0$ for $i < n -1$.
\end{prop}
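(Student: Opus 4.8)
The plan is to extract the desired vanishing directly from the Atiyah--Bredon--Franz--Puppe sequence \eqref{eqABseqForX} for $\dt^{n-1}$, combined with the computation of the relative cohomology of faces in Corollary \ref{FaceCohomology}. Since the action is equivariantly formal, the sequence \eqref{eqABseqForX} with $k = n-1$ is exact, and in particular the piece
\[
H^{*+n-3}_{\dt^{n-1}}(X_{n-2},X_{n-3}) \xrightarrow{\ \delta_{n-2}\ } H^{*+n-1}_{\dt^{n-1}}(X,X_{n-2}) \to 0
\]
shows that $\delta_{n-2}$ is surjective, so $H^*_{\dt^{n-1}}(X,X_{n-2})$ is a quotient of $H^*_{\dt^{n-1}}(X_{n-2},X_{n-3})$. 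The first task is therefore to relate the equivariant relative cohomology $H^*_{\dt^{n-1}}(X_i,X_{i-1})$ to the ordinary (nonequivariant) relative cohomology of the orbit space pair $(Q_i,Q_{i-1})$, and in turn to the faces of rank $i$.

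The key step is the identification of $H^*_{\dt^{n-1}}(X_i, X_{i-1})$ as a free $H^*(B\dt^{n-1})$-module whose generators are indexed by the faces of rank $i$, with the rank-$i$ face $F$ contributing $H^*(F,F_{-1}) = H^*(D^i,\partial D^i)$ shifted appropriately. Concretely, for each face $F$ of rank $i$ the face submanifold $X_F$ is a locally standard equivariantly formal $2$-torus manifold with $2$-torus $\dt^{n-1}/G_F$ of rank $i$; its equivariant cohomology over $\dt^{n-1}$ is that of $\dt^{n-1}/G_F$ tensored up over $B\dt^{n-1}$, and the contribution of the open face $F^\circ$ to $H^*_{\dt^{n-1}}(X_i,X_{i-1})$ is a free module on $H^*(F,F_{-1})$. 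Summing over the (finitely many) faces of rank $i$ and taking $i = n-2$ gives that $H^*_{\dt^{n-1}}(X_{n-2},X_{n-3})$ is concentrated, as a module over $H^*(B\dt^{n-1})$, in module-degrees coming from $H^{n-2}(D^{n-2},\partial D^{n-2})$; hence its image under $\delta_{n-2}$, which is all of $H^{*+n-1}_{\dt^{n-1}}(X,X_{n-2})$, vanishes in the relevant internal degrees. Passing from the equivariant statement back to $Q$: because the action restricted to each relevant stratum has the orbit space as a "quotient with free part," one has $H^*_{\dt^{n-1}}(X,X_{n-2})$ surjecting onto (or isomorphic in low degrees to) $H^*(Q,Q_{n-2})$, and the degree bookkeeping then forces $H^i(Q,Q_{n-2}) = 0$ for $i < n-1$.

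An alternative, possibly cleaner route is to run the analogue of \eqref{eqABseqForX} directly at the level of the orbit space: the orbit type filtration $Q_{-1}\subset Q_0\subset\cdots\subset Q_{n-1}=Q$ gives a spectral sequence (or a long exact Mayer--Vietoris-type argument) whose $E_1$-page in filtration $i$ is $\bigoplus_{F\colon \rk F = i} H^*(F,F_{-1})$, which by Corollary \ref{FaceCohomology} equals $\bigoplus_F H^*(D^i,\partial D^i)$, i.e. is concentrated in cohomological degree $i$. Then $H^*(Q,Q_{n-2})$ is computed by the top filtration jump, supported in degree $n-1$, so $H^i(Q,Q_{n-2}) = 0$ for $i<n-1$ is immediate from the degree concentration. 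The main obstacle I anticipate is the careful passage between the equivariant cohomology groups appearing in \eqref{eqABseqForX} and the nonequivariant cohomology of the orbit space pairs $(Q_i, Q_{i-1})$ — i.e. justifying that one may "divide out" the $H^*(B\dt^{n-1})$-action and that the free part of the action on each $X^{(H)}\setminus$lower strata does not introduce extra cohomology in low degrees; this requires knowing that the orbit map is nice enough (local standardness of the face submanifolds and the manifold structure of $Q$ from Theorem~3.7) and keeping track of degree shifts. Once that translation is set up, the vanishing is a formality from Corollary \ref{FaceCohomology} and exactness of \eqref{eqABseqForX}.
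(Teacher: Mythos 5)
Your main argument (paragraphs one and two) is essentially the paper's own proof: exactness of the Atiyah--Bredon--Franz--Puppe sequence, the identification $H^{*}_{G}(X_{n-2},X_{n-3})\cong\bigoplus_F H^{*}(F,F_{-1})\otimes H^*(BG_F)$ with the degree concentration coming from Corollary \ref{FaceCohomology}, and the isomorphism $H^i_G(X,X_{n-2})\cong H^i(Q,Q_{n-2})$ from freeness of the action on $X\setminus X_{n-2}$ (the paper phrases the last step of the exactness argument as ``all middle terms vanish in negative internal degree'' rather than via surjectivity of $\delta_{n-2}$, but this is the same computation). Two small imprecisions: the terms are free modules over $H^*(BG_F)$, not over $H^*(B\dt^{n-1})$, and the comparison map runs $H^*(Q,Q_{n-2})\to H^*_G(X,X_{n-2})$, so you need it to be injective (in fact it is an isomorphism by the relative Vietoris--Begle theorem), not a surjection the other way. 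Be warned, however, that your ``alternative, cleaner route'' is circular: $H^*(Q,Q_{n-2})$ \emph{is} the top column of the $E_1$-page of the orbit-space filtration, and Corollary \ref{FaceCohomology} only covers faces of rank $i<n-1$, so the claimed degree concentration of that column is precisely the statement to be proved and cannot be read off from the filtration alone.
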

\begin{proof}
To be short denote $G = \dt^{n-1}$. Consider $i$-th term in \eqref{eqABseqForX} with $i \leq n - 2$:
$$
H^{*+i}_G(X_{i},X_{i-1})\cong \bigoplus_{F\colon\dim F=i}H^{*+i}_G(X_F,X_F\cap X_{i-1})\cong \bigoplus_{F\colon\dim F=i}H^{i}(F,F_{-1})\otimes H^*(BG_F).
$$

The first isomorphism follows from the equivariant version of Mayer-Vietoris sequence and the second isomorphism follows from the two facts: the action of group $G/G_F$ on $X_F \setminus (X_F \cap X_{i-1})$ is free and from Corollary \ref{FaceCohomology}. Therefore, $H^{*+i}_G(X_{i},X_{i-1}) =  0$ for $* < 0$ and $i \leq  n-2$. Consider $* < 0$. Then from exactness of sequence $\eqref{eqABseqForX}$ we get that $H_G^{i}(X,X_{n-2}) = 0$ for $i < n-1$. On the other hand, we have $H_G^{i}(X,X_{n-2}) \cong H^{i}(Q,Q_{n-2})$ since the action of $G$ on $X\setminus X_{n-2}$ is free. Hence, we have $H^{i}(Q,Q_{n-2}) = 0$ for $i < n-1$.
\end{proof}

Now we can prove Theorem \ref{Th1} from the introduction. 
\begin{thm}
Let $\dt^{n-1}$ act on $X = X^n$ equivariantly formal and in general position. Then the orbit space $Q = X/\dt^{n-1}$ is a $\dt$-homology $n$-sphere, i.e. $H^*(Q; \dt) \cong H^*(S^n; \dt)$.
\end{thm}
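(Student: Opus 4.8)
The plan is to combine the relative cohomology vanishing from Proposition \ref{RelHom} with an inductive description of the bottom of the orbit-type filtration, and then run the long exact sequence of the pair $(Q, Q_{n-2})$. First I would show that $Q_{n-2}$, together with its own filtration, is again a ``$\dt$-homology sphere'' of the appropriate dimension — more precisely, that $H^*(Q_{n-2}) \cong H^*(S^{n-1})$. The idea is induction on $n$: each face submanifold $X_F$ of rank $n-2$ is, by the Proposition proved just above, an equivariantly formal $2$-torus manifold of dimension $n-1$ with a complexity-one action (of $\dt^{n-1}/G_F \cong \dt^{n-2}$) in general position, so by the inductive hypothesis its orbit space $F = X_F/(\dt^{n-1}/G_F)$ is a $\dt$-homology $(n-1)$-sphere. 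The faces of rank $n-2$ glue along lower faces, and the combinatorics of how they meet is governed by the standard local model $\R^n/G \cong \R^n$ from Section 3: near a point of $Q_{n-3}$ the orbit space looks like $\R^n$ stratified the way $\R^n/\dt^{n-1}$ is, so $Q_{n-2}$ near such a point is the cone on a sphere of one dimension lower. This should identify $Q_{n-2}$ as a closed homology $(n-1)$-manifold which is a $\dt$-homology sphere.

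Second, granting $H^*(Q_{n-2}) \cong H^*(S^{n-1})$, I would feed this into the long exact sequence of the pair:
\begin{equation*}
\cdots \to H^{i}(Q, Q_{n-2}) \to H^{i}(Q) \to H^{i}(Q_{n-2}) \to H^{i+1}(Q, Q_{n-2}) \to \cdots
\end{equation*}
By Proposition \ref{RelHom}, $H^{i}(Q, Q_{n-2}) = 0$ for $i < n-1$, so $H^{i}(Q) \cong H^{i}(Q_{n-2}) \cong H^{i}(S^{n-1})$ for $i \leq n-2$; in particular $\tilde H^{i}(Q) = 0$ for $0 < i \leq n-2$ and $Q$ is connected, and $H^{n-1}(Q, Q_{n-2}) = \bigoplus_{F : \dim F = n-1} H^{n-1}(F, F_{-1})$ — but there is only the single top face $F = Q$ itself, with $F_{-1} = Q_{n-2}$, so this term just recovers $H^{n-1}(Q, Q_{n-2})$ and the sequence degenerates into a statement about the top two degrees. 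To pin down $H^{n-1}(Q)$ and $H^n(Q)$ I would invoke that $Q$ is a compact topological $n$-manifold (first part of Theorem \ref{Th1}, already proved) without boundary: it is closed because $X$ is closed and $Q$ is a continuous image, and it has no boundary because the local models $\R^n/\mathrm{Stab}(x) \cong \R^n$ from Corollary \ref{StabOfGenPos} are honest $\R^n$'s, not half-spaces. A closed $\dt$-homology $n$-manifold $Q$ with $\tilde H^{i}(Q; \dt) = 0$ for $i \leq n-2$ satisfies $\dim_\dt H^*(Q;\dt) \geq 2$; Poincaré duality over $\dt$ (valid for $\dt$-homology manifolds) then forces $H^{n-1}(Q) = 0$, $H^n(Q) = \dt$, and $H^0(Q) = \dt$, i.e. $Q$ is a $\dt$-homology $n$-sphere.

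The main obstacle is the first step: establishing that $Q_{n-2}$ is a $\dt$-homology $(n-1)$-sphere, i.e. making the induction on $n$ actually work. The subtlety is that $Q_{n-2}$ is not the orbit space of a single manifold-with-the-same-kind-of-action; it is built from the rank-$(n-2)$ face submanifolds $X_F$ glued along lower-rank faces. One has to check that this gluing is compatible with the inductive structure — that the orbit spaces $F$ of the various $X_F$ meet along faces exactly as dictated by the local model $\R^n/G$, and that the resulting union is again a closed homology manifold of dimension $n-1$. A clean way to organize this may be to set up the induction so that the hypothesis asserts simultaneously that $Q$ is a $\dt$-homology sphere \emph{and} that each $Q_i$ is a closed $\dt$-homology $i$-manifold meeting the local models correctly, so that the statement about $Q_{n-2}$ is literally the inductive hypothesis applied ``one stratum down'' near each point, patched together via Mayer–Vietoris. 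The base case $n = 1$ (where $\dt^0$ acts trivially on $X = S^1$, $Q = S^1$) is immediate, and the case $n = 2$ is Example 1.1. I would expect the rest — the long exact sequence bookkeeping and the Poincaré duality argument — to be routine once this structural induction is in place.
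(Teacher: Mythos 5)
There is a genuine gap, and it sits exactly where you predicted the ``main obstacle'' would be: your first step is based on a dimension miscount. By Proposition 4.3 of the paper, a face submanifold $X_F$ of rank $n-2$ has $\dim X_F = n-2$ (not $n-1$), and the residual action of $\dt^{n-1}/G_F \cong \dt^{n-2}$ on it has complexity \emph{zero}: $X_F$ is an equivariantly formal $2$-torus manifold, so by Theorem \ref{thm:ComplZero} its orbit space $F$ is mod $2$ face-acyclic --- homologically a disk rel its boundary faces (Corollary \ref{FaceCohomology}), not a homology sphere. Your inductive hypothesis (``complexity one in general position'') therefore does not apply to the rank-$(n-2)$ pieces, and the conclusion you want from it is false: $Q_{n-2}$ is an $(n-2)$-dimensional complex, so $H^{n-1}(Q_{n-2};\dt)=0$ and it cannot satisfy $H^*(Q_{n-2}) \cong H^*(S^{n-1})$. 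A concrete counterexample is $n=2$ with $X=M_g$: there $Q_{n-2}=Q_0$ is the set of $2g+2$ fixed points, with $\dim_\dt H^0(Q_0)=2g+2$, which is not a homology $S^1$ (nor a homology $S^0$ once $g\geq 1$). With the first step gone, the long exact sequence of $(Q,Q_{n-2})$ together with Proposition \ref{RelHom} only tells you $H^i(Q)\cong H^i(Q_{n-2})$ for $i\leq n-3$ and $H^{n-2}(Q)\hookrightarrow H^{n-2}(Q_{n-2})$, with $H^*(Q_{n-2})$ still unknown.

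The missing ingredient is a computation of $H^*(Q_{n-2})$ (equivalently, of the low-degree cohomology of $Q$), and this is where global equivariant formality must enter a second time, beyond Proposition \ref{RelHom}. The paper runs the spectral sequence of the orbit-type filtration of $Q$: by Corollary \ref{FaceCohomology} the $E_1$-page is concentrated in the $0$-th row for $p\leq n-2$, and the $0$-th row complex $\bigoplus_{\rk F=p}H^p(F,F_{-1})$ is identified, via a relative Vietoris--Begle argument applied to $\pi\colon X_G\to Q$ stratum by stratum, with the degree-zero part of the Atiyah--Bredon--Franz--Puppe sequence, whose exactness is exactly equivariant formality. That identification is the key step your outline has no substitute for; the ``patch together via Mayer--Vietoris using the local model'' suggestion does not supply it, because the local model controls the topology of $Q$ near a point but not the global acyclicity of the face complex. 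Your endgame --- $Q$ a closed connected topological $n$-manifold with vanishing reduced cohomology below degree $n-1$, finished off by Poincar\'e duality over $\dt$ --- is the same as the paper's and is fine once that input is available.
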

\begin{proof}
Consider the cohomology spectral sequence associated with the filtration 
\[
\varnothing=Q_{-1}\subset Q_0 \subset Q_1\subset\cdots\subset Q_{n-1}=Q, 
\]
where $Q_i = X_i/\dt^{n-1}$ and $Q_{-1} = \varnothing$:
\begin{equation} \label{ss}
    E_1^{p,q} = H^{p+q}(Q_p, Q_{p-1})\Rightarrow H^{p+q}(Q).
\end{equation}

We get that
\begin{equation}
    E_1^{p,q} = H^{p+q}(Q_p, Q_{p-1}) = \bigoplus_{F: rkF = p} H^{p+q}(F,F_{-1}).
\end{equation}

From the Corollary \ref{FaceCohomology} and Proposition \ref{RelHom} it follows that the only non-zero terms of the first page of the spectral sequence are the $0$-th row $(E_1^{p,0}, d^1)$ and the $n-1$-th column $(E_1^{n-1, q}, d^1)$. We have that the first page of the spectral sequence \eqref{ss} as shown on Figure~\ref{FirstPage}.

\begin{figure}[ht]
\centering
\begin{tikzpicture} [scale=0.7]
        \draw[->]  (0,0)--(7,0);
        \draw[->]  (0,-1)--(0,5);
        \draw (1,0)--(1,1); \draw (2,0)--(2,1); \draw (4,0)--(4,4); \draw (5,0)--(5,4);

        \draw (0,1)--(5,1); 

        \draw (0.5,0.5) node{$\ast$}; \draw (1.5,0.5) node{$\ast$}; \draw (3,0.5) node{$\cdots$};

        \draw (4.5,4.5) node{$\vdots$};
        
         \draw (4.5,2.5) node{$\ast$}; \draw (4.5,3.5) node{$\ast$}; 
         
        \draw (4.5,1.5) node{$\ast$}; 
        \draw (4.5,0.5) node{$\ast$};


        \draw (1,4) node{$E_1^{p,q}$};
        \draw (0.5,-0.3) node{\tiny $0$}; \draw (4.5,-0.3) node{\tiny $n-1$}; \draw (-0.3,0.5) node{\tiny $0$};
        \draw (6.5,-0.3) node{$p$}; \draw (-0.3,4.5) node{$q$};

        \draw (2,2) node{\large $0$}; \draw (3,-1) node{\large $0$}; 
        \draw (6,2) node{\large $0$};
\end{tikzpicture}
\caption{The first page of the spectral sequence \eqref{ss}}\label{FirstPage}
\end{figure}
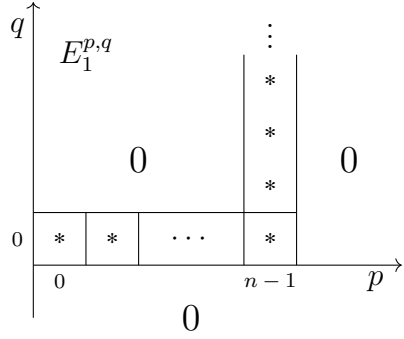

We claim that the differential complex in the $0$-th row $(E_1^{p,0}, d^1)$ is isomorphic to the degree $0$ part of the non-augmented version of Atiyah-Bredon-Franz-Puppe sequence \eqref{eqABseqForX}
$$
0\to  H^0_{\dt^{n-1}}(X_0)\stackrel{\delta_0}{\to}
H^{1}_{\dt^{n-1}}(X_1,X_0)\stackrel{\delta_1}{\to}\cdots
\stackrel{\delta_{n-3}}{\to}H^{n-2}_{\dt^{n-1}}(X_{n-2},X_{n-3})\stackrel{\delta_{n-2}}{\to}H^{n-1}_{\dt^{n-1}}(X,X_{n-2})\to 0.
$$
Indeed, we have the natural projection map $\pi: X_G \to X/G$ such that $\pi((X_{i})_{G}) \subset Q_i$ for any $i$. Therefore, $\pi$ induces a map between long exact sequences of $(X_{i+1},X_i,X_{i-1})_G$ and $(Q_{i+1},Q_i,Q_{i-1})$, hence $\pi^*$ commutes with $\delta_i$ for any $i$. We claim that the induced maps in cohomology 
$$\pi^* : H^i(Q_i,Q_{i-1}) \to H^{i}_{G}(X_i,X_{i-1})$$
are isomorphisms.

We have that 
$$H^{i}_G(X_{i},X_{i-1}) =  \bigoplus_{F\colon rkF=i}H^{i}_G(X_F,X_{F_{-1}}),$$ 
$$H^{i}(Q_i, Q_{i-1}) = \bigoplus_{F\colon rkF = i} H^{i}(F, F_{-1}).$$

Since $(X_F, X_{F_{-1}})$ fixed by $G_F$ and $EG = E(G_F) \times E(G/G_F)$, we have that $H_{G}^i(X_F, X_{F_{-1}}) = H^i_{G/G_{F}}(X_F, X_{F_{-1}}) \otimes_{\dt} H^0(BG_F)$. Therefore, it is enough to prove that the map $$\pi^* :H^{i}(F, F_{-1})  \to H^{i}_{G/G_F}(X_{F}, X_{F_{-1}})$$ is an isomorphism. We have that the action of $G/G_F$ is free on $X_F \setminus X_{F_{-1}}$, therefore $\pi^{-1}(x) = E(G/G_F)$ for any $x \in F \setminus F_{-1}$. Hence, from a relative version of the
Vietoris–Begle Theorem (see \cite{RelVerVB}), it follows that the induced map
$$\pi^* :H^{i}(F, F_{-1})  \to H^{i}_{G/G_F}(X_{F}, X_{F_{-1}})$$
is an isomorphism. Since the action is equivariantly formal, the Atiyah-Bredon-Franz-Puppe sequence \eqref{eqABseqForX} is exact. Therefore, the differential complex in the $0$-th row $(E_1^{p,0}, d^1)$ is  exact.

Hence, we have that the second page of the spectral sequence \eqref{ss} as shown on Figure~$\ref{SecondPage}$.

\begin{figure}[h]
\centering
\begin{tikzpicture} [scale=0.7]
        \draw[->]  (0,0)--(6,0);
        \draw[->]  (0,-1)--(0,4);
        \draw (1,0)--(1,1);  \draw (4,0)--(4,4); \draw (5,0)--(5,4);

        \draw (0,1)--(5,1); 

        \draw (0.5,0.5) node{$\dt$}; \draw (2.5,0.5) node{$0$}; 

        \draw (4.5,4.5) node{$\vdots$};
        
         \draw (4.5,2.5) node{$\ast$}; \draw (4.5,3.5) node{$\ast$}; 
         
        \draw (4.5,1.5) node{$\ast$}; 
        \draw (4.5,0.5) node{$0$};

        \draw (1,4) node{$E_2^{p,q}$};
        \draw (0.5,-0.3) node{\tiny $0$}; \draw (4.5,-0.3) node{\tiny $n-1$}; \draw (-0.3,0.5) node{\tiny $0$};
        \draw (6.5,-0.3) node{$p$}; \draw (-0.3,4.5) node{$q$};

        \draw (2,2) node{\large $0$}; \draw (3,-1) node{\large $0$}; 
        \draw (6,2) node{\large $0$};
\end{tikzpicture}

\caption{The second page of the spectral sequence \eqref{ss}} \label{SecondPage}
\end{figure}
We have that $E^{p,q}_{\infty} = 0$ for $0< p+q \leq n-1$, therefore $H^i(Q) = 0$ for $i \leq n-1$. On the other hand, $Q$ is a topological manifold, hence $H^i(Q) \cong H^i(S^n)$.
\end{proof} 

\section{Complexity one actions in case of small covers}
In this section we recall the definition of a small cover and prove  Theorem \ref{Th2}, \ref{Th3} from the introduction. For the details about small covers see \cite{DJ}. 
\begin{defin}
Let $P=P^n$ be a simple polytope of dimension $n$.
A small cover over $P$ is a smooth manifold  $X=X^n$ with a locally standard $\dt^n$-action such that the orbit space $X/\dt^n$ is diffeomorphic to a simple polytope $P$ as a manifold with corners. 
\end{defin}
 Let $\pi: X \to P$ be a small cover over $P$. For every face $F$ of $P$  and for every $x,y \in \pi^{-1}(F^{\circ})$ the stabilizer group of $x$ and $y$ is the same, i.e. $Stab(x) = Stab(y)$. Denote this stabilizer group by $G_F$. In particular, if $F$ is a facet, then $G_F$  is subgroup of rank one, hence $G_F = \langle \lambda(F)\rangle$ for some $\lambda(F) \in \dt^n$. Hence, we get  \textit{the characteristic function} 
 $$\lambda : \mathcal{F} \to \dt^n,$$
 from the set $\mathcal{F}$ of all facets of $P$.
 We denote $\lambda(F_i)$ by $\lambda_i$.
For a codimension k face $F$  we have that $F = F_1 \cap \dots \cap F_k$ for some facets $F_1, \dots, F_k \in \mathcal{F}$, then $G_F$ is a subgroup with rank equal to $k$ and generated by $\lambda_1, \dots, \lambda_k$. Therefore, we have the following $(*)$-condition
\\

$(*)$  Let $F = F_1 \cap \dots \cap  F_k$ be any codimension k face of $P$. Then $\lambda_1, \dots, \lambda_k$ are linearly independent in $\dt^n$.
\\

Conversely, a simple polytope $P$ and a map $\lambda: \mathcal{F} \to \dt^n$  satisfying the $(*)$-condition determine a small cover $X(P,\lambda)$ over $P$. For the construction of $X(P, \lambda)$ and other details see \cite{DJ}.
\begin{thm}[{\cite[Prop.\,1.8]{DJ}}]
    Let $X$ be a small cover over $P$ with characteristic function $\lambda: \mathcal{F} \to \dt^n$. Then $X$ and $X(P,\lambda)$ are equivariantly homeomorphic.
\end{thm}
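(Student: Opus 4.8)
The plan is to realize $X$ as built from $P$ and $\dt^n$ by the very recipe that defines $X(P,\lambda)$, and to package this as an explicit equivariant homeomorphism. Recall the model $X(P,\lambda) = (\dt^n \times P)/\!\sim$, where $(g,p)\sim(h,q)$ iff $p=q$ and $gh^{-1}\in G_{F(p)}$, with $F(p)$ the unique face of $P$ whose relative interior contains $p$; here $\dt^n$ acts by left translation on the first factor and $[g,p]\mapsto p$ is the orbit map onto $P$. The key object I need on the $X$ side is a continuous cross-section $s\colon P\to X$ of $\pi$: once it is available, I define $f\colon X(P,\lambda)\to X$ by $f([g,p]) = g\cdot s(p)$ and finish quickly.

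First I would build the cross-section $s$. Since a small cover is a locally standard action, every point of $X$ has an invariant neighborhood equivariantly modeled, up to an automorphism of $\dt^n$, on the standard coordinatewise $\dt^n$-action on $\R^n$; consequently, over the relative interior $F^{\circ}$ of a codimension-$k$ face $F$ the stabilizer is the constant rank-$k$ subgroup $G_F$, and $\pi^{-1}(F^{\circ})\to F^{\circ}$ is a principal bundle for the discrete group $\dt^n/G_F$. I would then construct $s$ by induction over the faces of $P$ in order of increasing dimension: choose arbitrary preimages over the vertices, and at each stage extend the section already defined on $\partial F$ across $F$. Because $\dt^n/G_F$ is discrete and the bundles in play are trivial over cells, each extension reduces to extending a locally constant $\dt^n/G_F$-valued map across a disk (with the one low-dimensional case of an edge handled by connectedness of the corresponding face submanifold $X_F$), which is immediate; collapsibility (shellability) of the simple polytope $P$ guarantees that this face-by-face procedure goes through globally. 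This is the only step that uses the geometry of small covers essentially, and I expect it to be the main obstacle — it is exactly the cross-section invoked in \cite[Prop.\,1.8]{DJ}.

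Granting $s$, the remainder is formal. The map $f([g,p])=g\cdot s(p)$ is well defined because $Stab(s(p)) = G_{F(p)}$ — the stabilizer of a point of $X$ depends only on the face of $P$ containing its image, which is precisely the content of the characteristic function — so $gh^{-1}\in G_{F(p)}$ forces $g\cdot s(p)=h\cdot s(p)$. By construction $f$ is continuous (it descends from the continuous map $\dt^n\times P\to X$, $(g,p)\mapsto g\cdot s(p)$), is $\dt^n$-equivariant, and covers $\id_P$; over each $p$ it restricts to $\dt^n/G_{F(p)}\to\pi^{-1}(p)$, $gG_{F(p)}\mapsto g\cdot s(p)$, which is surjective since $\pi^{-1}(p)$ is a single $\dt^n$-orbit and injective since $Stab(s(p))=G_{F(p)}$. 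Hence $f$ is a continuous equivariant bijection, and as $X(P,\lambda)$ is compact and $X$ is Hausdorff, $f$ is the desired equivariant homeomorphism. The only bookkeeping left would be to confirm that $f$ is compatible with the quotient topology defining $X(P,\lambda)$, which is automatic from the way it was obtained.
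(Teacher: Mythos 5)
The paper itself gives no proof of this statement --- it is quoted directly from \cite[Prop.~1.8]{DJ} --- so I am comparing your argument with the standard proof in the literature. Your overall architecture is the right one and matches Davis--Januszkiewicz: produce a cross-section $s\colon P\to X$ of the orbit map, then check that $[g,p]\mapsto g\cdot s(p)$ is a well-defined continuous equivariant bijection from the compact space $X(P,\lambda)$ to the Hausdorff space $X$, hence a homeomorphism. That second, formal half of your write-up is complete and correct, including the use of $Stab(s(p))=G_{F(p)}$ for both well-definedness and injectivity.

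The gap is in the construction of the section, which you correctly identify as the only substantive step. Your induction ``choose arbitrarily on lower faces, then extend the section already defined on $\partial F$ across $F$'' can get stuck: a section over $\partial F$ assembled from independent choices on the facets of $F$ need not lie in the closure of a single sheet of the covering $\pi^{-1}(F^{\circ})\to F^{\circ}$, and then no extension exists. Concretely, take $X=T^2=(\R/2\Z)^2$ with $\dt^2$ acting by sign reversal in each coordinate, a small cover over the square $P=[0,1]^2$ via $(x,y)\mapsto(|x|,|y|)$. Over each edge of $P$ there are two arcs to choose from; the choice (bottom) $[0,1]\times\{0\}$, (left) $\{0\}\times[-1,0]$, (right) $\{1\}\times[0,1]$, (top) $[0,1]\times\{1\}$ is a perfectly good continuous section over $\partial P$ (the arcs match at the four fixed points), yet none of the four sheet closures $[0,1]^2$, $[0,1]\times[-1,0]$, $[-1,0]\times[0,1]$, $[-1,0]^2$ contains both the chosen left and right arcs, so this section does not extend over $P$. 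The reduction you invoke --- ``a locally constant $\dt^n/G_F$-valued map on a connected $\partial F$ is constant'' --- is not available, because $\pi^{-1}(F)\to F$ is a principal $\dt^n/G_F$-bundle only over $F^{\circ}$, so the section over $\partial F$ has no canonical interpretation as a $\dt^n/G_F$-valued map; the failure above is exactly a monodromy phenomenon that this phrasing hides, and the appeal to shellability is not actually implemented. A correct route avoids the induction altogether: let $S$ be a connected component of the free part $\pi^{-1}(P^{\circ})$; local standardness shows that $\overline{S}\to P$ is a proper local homeomorphism, hence a covering map, and since $\overline{S}$ is connected and $P$ is simply connected it is a homeomorphism, so $(\overline{S}\to P)^{-1}$ is the desired section. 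With that replacement your proof goes through.
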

Let $X$ be a small cover, let $G \cong \dt^{n-1}$ be a subgroup of $\dt^n$ of index 2, then we get an restricted $G$-action of complexity one on $X$.  The subgroup $G$ is called a \textit{$2$-subtorus in general position}, if this $G$-action is in general position.

\begin{rem}
    Notice that it is possible that a $2$-subtorus in general position does not exist.  For example,  for the small cover $\RP^2$ over $\Delta^2$ there is no  $2$-subtorus in general position. 
\end{rem}
However, we will see that if $X$ is an orientable small cover, then such $2$-subtorus exist. 
Every subgroup $G \subset \dt^n$ with rank $n-1$ is determined by some  non-zero linear functional $\xi \in (\dt^n)^*$ by the following $G = \Ker(\xi)$. We have the following criteria when $G$ is in general position. 

\begin{prop} \label{exsubtorus}
Let $X = X^n$ be a small cover, let $\lambda_i = \lambda(F_i)$ be characteristic vectors.  
    The $2$-subtorus $G = \Ker (\xi: \dt^n \to \dt)$ is in general position if and only if $\xi(\lambda_i) = 1$, in other words $\lambda_i \notin G$, for every facet $F_i \in \mathcal{F}$.
\end{prop}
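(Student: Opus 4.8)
The plan is to pass to a vertex of $P$ and translate the general-position condition at the corresponding fixed point of $X$ into a statement about the characteristic vectors, then propagate it to all facets. Let me set up the dictionary first. Since $X$ is a small cover, the $\dt^n$-action is locally standard, so at a vertex $p = F_{i_1}\cap\dots\cap F_{i_n}$ the fixed point $x = \pi^{-1}(p)$ has tangent representation with weights that are precisely the images in $\Hom(\dt^n,\dt)$ of the characteristic vectors $\lambda_{i_1},\dots,\lambda_{i_n}$ under the duality identification, and these form a basis of $(\dt^n)^*$ by the $(*)$-condition. When we restrict to $G = \Ker\xi$, the weights of the tangent representation of the restricted $G$-action at $x$ are the restrictions $\lambda_{i_j}|_G \in \Hom(G,\dt) \cong (\dt^{n-1})^*$; more precisely, identifying $\Hom(G,\dt)$ with $(\dt^n)^*/\langle\xi\rangle$, the weight $\alpha_{x,j}$ is the class of $\lambda_{i_j}$ modulo $\xi$.

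\medskip

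The first step is to observe that the restricted $G$-action at $x$ is in general position (i.e.\ any $n-1$ of the $n$ weights at $x$ are linearly independent over $\dt$) if and only if no weight $\alpha_{x,j}$ is zero and the $n$ weights satisfy a single linear relation. Since $\{\lambda_{i_1},\dots,\lambda_{i_n}\}$ is a basis of $(\dt^n)^*$, their images modulo $\langle\xi\rangle$ span the $(n-1)$-dimensional space and satisfy exactly one relation, namely the one coming from expressing $\xi$ in that basis: writing $\xi = \sum_{j} \varepsilon_j \lambda_{i_j}$ with $\varepsilon_j \in \dt = \{0,1\}$ additively, the relation among the weights is $\sum_j \varepsilon_j \alpha_{x,j} = 0$. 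General position at $x$ then forces every $\varepsilon_j = 1$ (if some $\varepsilon_j = 0$ the corresponding weight drops out, leaving only $n-2$ independent weights among the relevant $n-1$; equivalently $\alpha_{x,j}=0$ when the relation is degenerate), i.e.\ $\xi(\lambda_{i_j}) = 1$ for every $j$, which is exactly $\lambda_{i_j}\notin G$. Thus the general-position condition \emph{at the vertex $p$} is equivalent to $\xi(\lambda_{i})=1$ for the $n$ facets through $p$.

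\medskip

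The second step is to upgrade this from one vertex to all facets. Here I would use connectedness of the dual graph of $P$ (the graph on vertices of $P$ with edges given by edges of $P$): moving along an edge $p \to p'$ of $P$ changes the set of $n$ facets through the vertex by a single facet, so the conditions "$\xi(\lambda_i)=1$ for all $i$ in the star of $p$" glue together, and since every facet contains some vertex and the graph is connected, general position \emph{at every vertex} is equivalent to $\xi(\lambda_i)=1$ for every facet $F_i\in\mathcal F$. Finally, general position of the $G$-action as a whole means general position at every fixed point; the fixed points of the restricted $G$-action are exactly the $\dt^n$-fixed points, which are exactly the $\pi^{-1}(\text{vertices})$ (there are no new fixed points since $G$ has finite index), so "general position of the $G$-action" $\iff$ "general position at every vertex" $\iff$ "$\xi(\lambda_i)=1$ for all $i$", completing the proof.

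\medskip

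The main obstacle I anticipate is the careful bookkeeping in the first step: correctly identifying $\Hom(G,\dt)$ with $(\dt^n)^*/\langle\xi\rangle$ and checking that general position at $x$ is equivalent to the relation $\sum\varepsilon_j\alpha_{x,j}=0$ being "non-degenerate" (all $\varepsilon_j=1$). One must rule out, for instance, the possibility that some $\alpha_{x,j}=0$ while the remaining $n-1$ weights still have $n-1$ of the original $n$ being independent — but if $\alpha_{x,j_0}=0$ then any $(n-1)$-subset containing $\alpha_{x,j_0}$ is dependent, so that case is genuinely excluded, and $\alpha_{x,j_0}=0$ means $\lambda_{i_{j_0}}\equiv 0 \bmod \xi$, i.e.\ $\lambda_{i_{j_0}}\in\langle\xi\rangle$, impossible since $\lambda_{i_{j_0}}\neq 0$ and the only nonzero element of $\langle\xi\rangle$ in $(\dt^n)^*$ is $\xi$ itself — wait, that would force $\lambda_{i_{j_0}}=\xi$, which contradicts $\lambda_{i_{j_0}}\notin G$ being false; so one does need to track whether the degenerate relation corresponds to $\varepsilon_{j_0}=0$ (weight literally drops out of the relation) versus a vanishing weight, and confirm both reduce to "$\xi(\lambda_{i_{j_0}})=1$ fails". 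Once that local linear algebra is pinned down, the globalization via the edge-graph is routine.
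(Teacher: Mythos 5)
Your argument is correct and is essentially the paper's own proof: at the fixed point over a vertex $p=F_{i_1}\cap\dots\cap F_{i_n}$ the weights of the restricted action are the classes of the dual basis $\lambda^*_{i_1},\dots,\lambda^*_{i_n}$ in $(\dt^n)^*/\langle\xi\rangle$, the unique relation among them is $\sum_j \xi(\lambda_{i_j})\,\lambda^*_{i_j}=0$, and general position holds iff every coefficient equals $1$; the paper leaves the passage from one vertex to all facets implicit, whereas you spell it out (for which you only need that every facet contains a vertex --- connectedness of the edge graph is not required). The hesitations in your last paragraph are already resolved by your second paragraph: a set of $n-1$ of the images is dependent iff the unique relation is supported on it, i.e.\ iff the omitted coefficient $\xi(\lambda_{i_{j_0}})$ vanishes, so there is no separate ``vanishing weight'' case to track.
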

\begin{proof}
    If $p = F_{i_1} \cap \dots \cap F_{i_n}$ is a vertex of $P$, then the elements of the dual basis $\lambda^*_{i_1}, \dots, \lambda_{i_n}^*$ are the weights of the tangent representation of $\dt^n$ in the corresponding fixed point.  Therefore, $G = \Ker (\xi)$ is in general position at this fixed point if and only if any $n-1$ of the weights $\lambda^*_{i_1}, \dots, \lambda_{i_n}^*$ are linearly independent in $(\dt^n)^*/\langle\xi\rangle.$

    Since  $\lambda^*_{i_1}, \dots, \lambda_{i_n}^*$ is a basis, we have that $\xi = a_1 \lambda^*_{i_1} + \dots + a_n \lambda^*_{i_n}$ in $\dt^n$ for $a_i \in \dt$. Therefore, we have  $a_1 \lambda^*_{i_1} + \dots + a_n \lambda^*_{i_n} = 0$ in $(\dt^n)^*/\langle\xi\rangle$ for $a_i \in \dt$. We have that any $n-1$ of the weights $\lambda^*_{i_1}, \dots, \lambda_{i_n}^*$ are linearly independent in $(\dt^n)^*/\langle\xi\rangle$ if and only if all $a_i$ are non-zero, i.e. $a_i = 1$  for every $i$. The last statement is equivalent to that $\xi(\lambda_{i_k})=1$ for all $i_k$.
\end{proof}
\begin{cor}\label{StGenPosSC}
    If $G \subset \dt^n$ is in general position, then $Stab(x) \not\subset G$ for every $x \in X$.
\end{cor}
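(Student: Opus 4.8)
The plan is to read the stabilizer of an arbitrary point off the combinatorics of the small cover and then feed it into the criterion of Proposition~\ref{exsubtorus}. Write $G=\Ker(\xi)$ for a non-zero $\xi\in(\dt^n)^{*}$ and let $\pi\colon X\to P$ be the quotient projection. For $x\in X$ there is a unique face $F$ of $P$ with $\pi(x)\in F^{\circ}$; if $\codim F = k$ and $F=F_{i_1}\cap\dots\cap F_{i_k}$ as an intersection of facets, then by the defining properties of a small cover recalled above one has $Stab(x)=G_F=\langle\lambda_{i_1},\dots,\lambda_{i_k}\rangle$.

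Next I would invoke that $G$ is a $2$-subtorus in general position, which by Proposition~\ref{exsubtorus} is equivalent to $\xi(\lambda_i)=1$, i.e.\ $\lambda_i\notin G$, for \emph{every} facet $F_i\in\mathcal{F}$. Consequently, as soon as $x$ lies over a face $F$ of positive codimension, the generator $\lambda_{i_1}$ of $Stab(x)$ already fails to lie in $G$, and therefore $Stab(x)\not\subset G$. This is the entire content of the corollary; the points lying over the interior of $P$ have trivial stabilizer and are of no interest here.

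I do not expect a genuine obstacle: the assertion is immediate from Proposition~\ref{exsubtorus} together with the fact, recalled in the previous section, that every isotropy subgroup of a small cover is generated by the characteristic vectors of the facets through the corresponding face. The only points requiring a word of care are the bookkeeping around the trivial stabilizer over the interior of $P$ and, if one wants the consequence used afterwards, the translation of "$Stab(x)\not\subset G$" into the statement that for each point with non-trivial isotropy the whole residual group $\dt^n/G\cong\dt$ fixes its image in $X/G$, so that $(X/G)^{\dt^n/G}$ is precisely the image of the preimage of the boundary of $P$.
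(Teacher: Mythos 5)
Your proof is correct and follows the paper's own (very terse) argument: every non-trivial stabilizer is generated by characteristic vectors $\lambda_i$ of the facets through the corresponding face, and Proposition~\ref{exsubtorus} guarantees $\lambda_i\notin G$ for every facet, so no such stabilizer can be contained in $G$. Your remark that points over the interior of $P$ (with trivial stabilizer) must be excluded is a fair observation about how the corollary is actually used, and matches the paper's implicit reading.
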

\begin{proof}
    Indeed, Stab(x) is generated by $\lambda_i$ for some $i$. 
\end{proof}
\begin{rem}\label{GenPosSC}
    From the proof, we see that there exist only one $2$-subtorus in general position. If we choose a vertex $p = F_{i_1} \cap \dots \cap F_{i_n}$, then corresponding characterestic vectors $\lambda_{i_1}, \dots, \lambda_{i_n}$ is a basis of $\dt^n$. Taking these vectors as generators of $\dt^n$, we get that
    $$G = \{ (g_1, \dots, g_n) \in \dt^n : \Pi_{i=1}^n g_i = 1 \}$$

    in the multiplicative notation.
\end{rem}
The condition from Proposition \ref{exsubtorus} is related to the following result of H.\,Nakayama and Y. Nishimura. 
\begin{thm}[{\cite[Thm.\,1.7]{SmallCovers}}]
    A small cover $X=X^n$ is orientable if and only if there exist $\xi \in (\dt^n)^*$ such that $\xi(\lambda_i) = 1$ for every face $F_i \in \mathcal{F}$.
\end{thm}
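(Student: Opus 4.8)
The plan is to deduce orientability directly from the presentation of a small cover as a glued union of copies of the polytope, and then to translate the resulting orientation bookkeeping into the displayed condition on $\lambda$; this is in essence the argument of Nakayama and Nishimura. Recall from \cite{DJ} that $X$ is equivariantly homeomorphic to $X(P,\lambda)=(\dt^{n}\times P)/\!\sim$, where $(g,x)\sim(h,x)$ exactly when $g^{-1}h\in G_{F}$ for the face $F$ with $x$ in its relative interior, and $G_{F_i}=\langle\lambda_i\rangle$ when $F_i$ is a facet. The images of the slices $\{g\}\times P$ cover $X$ by $2^{n}$ copies of $P$; the copies indexed by $g$ and $h$ are glued along the preimage of a facet $F_i$ precisely when $h=g\lambda_i$, and since $\lambda$ spans $\dt^{n}$ (forced by the $(*)$-condition at a vertex) the adjacency graph on these $2^{n}$ copies is connected.

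\textbf{Step 1: orientability as a sign function on $\dt^n$.} I would first fix an orientation of $\R^{n}$, hence of $P$, and transport it to each copy $\{g\}\times P$. A global orientation of $X$ then amounts to a choice of sign $\epsilon(g)\in\{\pm1\}$ for each copy making all identifications orientation-preserving. Computing in the slice $\R^{n-1}\times\R$ transverse to the interior of a facet $F_i$, where the copies $g$ and $g\lambda_i$ are glued by a reflection, shows that compatibility across facets is exactly the system
\[
\epsilon(g\lambda_i)=-\,\epsilon(g)\qquad\text{for all }g\in\dt^{n},\ F_i\in\mathcal F .
\]
These facet relations are moreover sufficient for a genuine orientation: near the relative interior of a codimension-$k$ face $F=F_{i_1}\cap\dots\cap F_{i_k}$ a neighbourhood of $X_F$ in $X$ is homeomorphic to $\R^{n-k}\times\R^{k}=\R^{n}$, realised by gluing $2^{k}$ orthants of $\R^{k}$ along their coordinate walls, and its pieces are consistently oriented once the codimension-one relations hold. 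So $X$ is orientable iff some $\epsilon$ satisfying the system above exists; normalising $\epsilon(1)=1$ and using connectivity of the adjacency graph, $\epsilon$ is then forced, and the only question is whether it is well defined.

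\textbf{Step 2: well-definedness as a linear functional.} Since every $g\in\dt^{n}$ is a product $\lambda_{i_1}\cdots\lambda_{i_k}$ of characteristic vectors, the relations force $\epsilon(\lambda_{i_1}\cdots\lambda_{i_k})=(-1)^{k}$; such an $\epsilon$ exists iff the parity $k\bmod 2$ depends only on the product, i.e. iff the functional $\mu\colon\dt^{\mathcal F}\to\dt$, $\mu(e_i)=1$, on the free $\dt$-module on the facets annihilates the kernel of the surjection $\pi\colon\dt^{\mathcal F}\to\dt^{n}$, $e_i\mapsto\lambda_i$. By $\dt$-linear duality this holds iff $\mu$ factors as $\mu=\xi\circ\pi$ for some $\xi\in(\dt^{n})^{*}$, i.e. iff $\xi(\lambda_i)=\mu(e_i)=1$ for every facet $F_i$; conversely, given such a $\xi$, the homomorphism $\epsilon(g)=(-1)^{\xi(g)}$ satisfies the system of Step 1. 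Here $\dt\cong\Z_2$ is read additively, so ``$\xi(\lambda_i)=1$'' means $\lambda_i\notin\Ker\xi$, in agreement with Proposition \ref{exsubtorus}. Combining the two steps gives the equivalence.

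The step I expect to be the main obstacle is Step 1: verifying carefully that the facet sign relations really do patch together to a global orientation, i.e. working out the local model of $X$ near each higher-codimension face submanifold $X_F$ and confirming that no further orientation obstruction is hidden there. Once this is in place, Step 2 is a short piece of $\dt$-linear algebra. As an alternative to Step 1 one can instead invoke the Davis--Januszkiewicz computation of $H^{*}(X;\dt)$ together with the formula expressing $w_1(X)$ as the sum of the Poincar\'e duals of the facet submanifolds; vanishing of $w_1$ then unwinds to the same condition, at the cost of trading the elementary gluing computation for the computation of the cohomology ring.
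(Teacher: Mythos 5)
The paper does not actually prove this statement: it is imported verbatim from Nakayama--Nishimura \cite[Thm.~1.7]{SmallCovers} and used as a black box, so there is no internal proof to compare yours against. Your argument is a correct, self-contained reconstruction of essentially the original one. Step 1 is right: in the model $X(P,\lambda)=(\dt^n\times P)/\!\sim$ the copies $\{g\}\times P$ and $\{g\lambda_i\}\times P$ are glued along $F_i$ by a map that is locally $(y,t)\mapsto(y,-t)$, which forces $\epsilon(g\lambda_i)=-\epsilon(g)$. The point you flag as the main obstacle --- checking that no further obstruction hides near the faces of codimension $\geq 2$ --- is in fact automatic and does not require the orthant-by-orthant analysis: the union $S$ of the strata over faces of codimension $\geq 2$ is a closed subset of $X$ of codimension $\geq 2$, and removing such a subset changes neither connectedness nor orientability (every loop can be pushed off $S$, so $H^1(X;\dt)\to H^1(X\setminus S;\dt)$ is injective and $w_1(X)$ is detected on the complement); hence orientability of $X$ is equivalent to orientability of the open dense part consisting of the $2^n$ open copies glued along open facets, i.e.\ exactly to the solvability of the facet sign relations. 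Step 2 is a correct piece of $\dt$-linear algebra: a homomorphism $\epsilon$ with $\epsilon(\lambda_i)=-1$ for all $i$ exists iff the all-ones functional on $\dt^{\mathcal F}$ factors through $e_i\mapsto\lambda_i$, iff some $\xi\in(\dt^n)^*$ has $\xi(\lambda_i)=1$ for every facet, consistently with Proposition \ref{exsubtorus}. The alternative you mention, via $w(X)=\prod_i(1+v_i)$ from Davis--Januszkiewicz so that $w_1=\sum_i v_i$ vanishes iff $\sum_i v_i$ lies in the ideal generated by the linear forms $\theta_j$, unwinds to the same condition and is equally valid.
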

From this result we get Theorem \ref{Th2} from the introduction.
\begin{cor} \label{ExistSC}
    Let $X$ be a small cover. There exists a $2$-subtorus in general position if and only if $X$ is orientable.
\end{cor}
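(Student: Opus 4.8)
The plan is to chain together the two results just stated. First I would recall that every index-$2$ subgroup $G \subset \dt^n$ has the form $G = \Ker(\xi)$ for a unique non-zero functional $\xi \in (\dt^n)^*$, and conversely each such $\xi$ determines an index-$2$ subgroup. Hence ``there exists a $2$-subtorus in general position'' is literally the same as ``there exists a non-zero $\xi \in (\dt^n)^*$ such that the $G$-action with $G = \Ker(\xi)$ is in general position.''

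Next I would invoke Proposition \ref{exsubtorus}, which says that for $G = \Ker(\xi)$ the $G$-action is in general position precisely when $\xi(\lambda_i) = 1$ (equivalently $\lambda_i \notin G$) for every facet $F_i \in \mathcal{F}$. Here one small point deserves a remark: a functional $\xi$ satisfying $\xi(\lambda_i) = 1$ for all facets is automatically non-zero, since $P$ has at least one facet and $\lambda_i \neq 0$; so the non-triviality requirement in the previous paragraph imposes no extra constraint, and the existence of a $2$-subtorus in general position is exactly equivalent to the existence of some $\xi \in (\dt^n)^*$ with $\xi(\lambda_i) = 1$ for every $F_i \in \mathcal{F}$.

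Finally I would apply the theorem of H.\,Nakayama and Y.\,Nishimura, \cite[Thm.\,1.7]{SmallCovers}, which asserts that a small cover $X = X^n$ is orientable if and only if such a functional $\xi$ exists. Concatenating the three equivalences yields the corollary. I do not anticipate a real obstacle: this statement is a formal consequence of Proposition \ref{exsubtorus} together with the cited orientability criterion, the genuine work having already been carried out in the proof of Proposition \ref{exsubtorus} (where general position is verified vertex by vertex by expanding $\xi$ in the dual basis at each vertex) and in \cite{SmallCovers}. The only thing to double-check when writing it up is that the two papers' conventions for the characteristic function and for ``facet'' agree with the ones fixed earlier in this section, which they do.
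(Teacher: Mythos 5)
Your proposal is correct and follows exactly the route the paper takes: the corollary is obtained by combining Proposition \ref{exsubtorus} (general position of $\Ker(\xi)$ is equivalent to $\xi(\lambda_i)=1$ for all facets) with the Nakayama--Nishimura orientability criterion, which is why the paper states it without further proof. Your extra remark that such a $\xi$ is automatically non-zero is a harmless and accurate bit of bookkeeping.
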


Now we can prove Theorem \ref{Th3} from the introduction. 
\begin{thm}
    Let $X = X^n$ be an orientable small cover over, let $G$ be the $2$-subtorus in general position. Then the orbit space $X/G$ is homeomorphic to the $n$-dimensional sphere~$S^n$.
\end{thm}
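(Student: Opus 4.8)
The plan is to compute $X/G$ directly from the Davis--Januszkiewicz model of a small cover, bypassing Theorem \ref{Th1}. Recall that $X=X(P,\lambda)$ is the quotient $(P\times\dt^n)/\!\sim$, where $[p,g]=[p',g']$ iff $p=p'$ and $g^{-1}g'\in G_{F(p)}$; here $F(p)$ is the smallest face of $P$ containing $p$, and for a face $F=F_{j_1}\cap\dots\cap F_{j_k}$ one has $G_F=\langle\lambda_{j_1},\dots,\lambda_{j_k}\rangle$, while $\dt^n$ acts by $h\cdot[p,g]=[p,hg]$. First I would pass to the $G$-orbit space: since $P\times\dt^n\to X$ and $X\to X/G$ are quotient maps, so is their composite, and it factors through $\id\times q\colon P\times\dt^n\to P\times(\dt^n/G)$, where $q\colon\dt^n\to\dt^n/G$ is the projection. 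A routine diagram chase then identifies $X/G$, as a topological space, with $(P\times(\dt^n/G))/\!\approx$, where $(p,\bar g)\approx(p,\bar h)$ exactly when $\bar g\bar h^{-1}\in q(G_{F(p)})$. Note that $\dt^n/G\cong\dt$ because $[\dt^n:G]=2$.

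Next I would read off the fibers of the projection $X/G\to P$. If $p$ lies in the interior of $P$, then $F(p)=P$ and $G_{F(p)}$ is trivial, so the fiber over $p$ has two points. If $p\in\partial P$, then $F(p)$ has positive codimension, so $G_{F(p)}$ is generated by some characteristic vectors $\lambda_{j_l}$; by Proposition \ref{exsubtorus} each such $\lambda_{j_l}$ lies outside $G$, hence $q(\lambda_{j_l})$ is the nontrivial element of $\dt^n/G\cong\dt$ and $q(G_{F(p)})=\dt^n/G$, so the fiber over $p$ is a single point. Therefore the relation $\approx$ identifies $(p,\bar g)$ with $(p,\bar h)$ precisely for $p\in\partial P$, which means that $X/G$ equals the double of $P$ along its boundary, i.e. $(P\times\{1,2\})/((p,1)\sim(p,2),\ p\in\partial P)$, with its quotient topology.

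Finally, because $P$ is a simple polytope, the pair $(P,\partial P)$ is homeomorphic to $(D^n,S^{n-1})$ via radial projection from an interior point, so the double of $P$ along $\partial P$ is homeomorphic to the double of $D^n$ along $S^{n-1}$, which is $S^n$; this gives $X/G\cong S^n$. I expect the only mildly delicate point to be the bookkeeping that $X/G$ genuinely carries the quotient topology of $P\times(\dt^n/G)$ under $\approx$, so that the tautological map to the double of $P$ is a homeomorphism; this is in any case automatic, since a continuous bijection from a compact space onto a Hausdorff space is a homeomorphism, and both Theorem \ref{Th1} and the finiteness of $X^G$ (equal to the $\dt^n$-fixed point set) guarantee these spaces are well-behaved. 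The one genuine input is Proposition \ref{exsubtorus}: the condition ``every $\lambda_i\notin G$'' is exactly what forces $q$ to be surjective on each $G_F$ with $\dim F<n$. As a sanity check, this recovers $T^n/G\cong S^n$ from Example $1.6$ as the double of the cube $I^n$, and it refines Theorem \ref{Th1}, which already yields that $X/G$ is a closed topological manifold with the $\dt$-cohomology of $S^n$.
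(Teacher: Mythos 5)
Your proposal is correct and follows essentially the same route as the paper: both identify the fibers of $X/G\to P$ as two points over the interior and one point over the boundary (via Proposition \ref{exsubtorus}, i.e.\ $\lambda_i\notin G$ for every facet), and conclude that $X/G$ is the double of $P\cong D^n$ along $\partial P$, hence $S^n$. The only difference is cosmetic: you obtain the product structure $P\times(\dt^n/G)$ directly from the Davis--Januszkiewicz quotient model, whereas the paper works with the abstract orbit space and invokes a section of the two-fold cover over the contractible interior of $P$.
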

\begin{proof}

Let $Q=X/G$ be the orbit space of the $G$-action, let $P=X/\dt^n$ be the orbit space of the $\dt^n$-action. By the definition of a small cover, $P$ is a simple polytope and $\text{dim} P = n$.  
Notice that $P = Q/(\dt^n/G)$ and we have the quotient map
$$
p: Q \to Q/(\dt^n/G).
$$

If $x \in P$ is a free $\dt^n$-orbit, i.e. $x$ in the interior of $P$, then $p^{-1}(x) = \dt$. Otherwise, by Corollary \ref{StGenPosSC} there exist $t \in Stab(x)$ such that $t \notin G$. Therefore, if a $\dt^n$-stabilizer group of $x$ is non-trivial, i.e. $x$ in the boundary of $P$, then it is a fixed point of $\dt^n/G$-action. Hence, for $x \in \partial P$ we have that $p^{-1}(x)$ is a single point. Since $P$ is contractible, the map $p: Q \to Q/(\dt^n/G)$ admit a section over the interior of $Q/(\dt^n/G) = P$. Therefore, we have
$$
Q \cong P \times \dt/\!\!\sim
$$
where $(x,1) \sim (x,-1)$ if $x \in \partial P$. Since $P$ is homeomorphic to the $n$-disc $D^n$, we have that 
$$
Q \cong D^n \sqcup D^n /\!\!\sim \cong S^n,
$$
which proves the statement. 
\end{proof}
\section*{Acknowledgements}
The author would like to thanks his advisor A.\,A.\,Ayzenberg for an invitation to toric topology, his help and guidance during this work, T.\,E.\,Panov for  helpful discussions about small covers. The author would like to thanks Eva Vlasova for her support and for drawing pictures.   

\end{document}